\newtheorem{theorem}{Theorem}
\newtheorem{proposition}[theorem]{Proposition}
\newtheorem{remark}[theorem]{Remark}
\newtheorem{lemma}[theorem]{Lemma}
\newcommand{\pvor}{\qep}
\newcommand{\vph}{\varphi}
\newcommand{\A}{\mathbb{A}}
\newcommand{\Aep}{\mathbb{A}_\ep}
\renewcommand{\H}{H}
\newcommand{\Hep}{H}
\newcommand{\Hi}{H_{\infty}}
\newcommand{\Pep}{\mathbb{P}_\ep}
\newcommand{\Piep}{\Pi_\ep}
\newcommand{\n}{\mathbf{n}}
\renewcommand{\d}{\mathrm{d}}
\newcommand{\real}{\mathbb{R}}
\newcommand{\R}{\mathbb{R}}
\newcommand{\loc}{\scriptstyle{loc}}
\newcommand\al{\alpha}
\newcommand\ep{\varepsilon}
\newcommand\Deltaep{\Delta_{\Pi_\ep}}
\newcommand\Deltar{\Delta_{\R^2}}
\newcommand\uep{u_\ep}
\newcommand\uept{\widetilde{u}_\ep}
\newcommand\vep{v_\ep}
\newcommand\wep{w_\ep}
\newcommand\wept{\widetilde{w}_\ep}
\newcommand\qep{q_\ep}
\newcommand\qept{\widetilde{q}_\ep}
\newcommand\etamu{\eta_\mu}
\newcommand\phimu{\varphi_\mu}
\newcommand\piep{\Pi_\ep}
\renewcommand{\S}{\operatorname{\mathbf{S}_\ep}}
\newcommand{\T}{\operatorname{\mathbf{T}_\ep}}
\newcommand{\K}{\operatorname{\mathbf{K}_\ep}}
\newcommand{\G}{\operatorname{\mathbf{G}_\ep}}
\newcommand{\supp}{\operatorname{supp}}
\newcommand{\ga}{\gamma}
\newcommand{\qb}{\overline{q}}
\newcommand{\ub}{\overline{u}}
\newcommand{\vb}{\overline{v}}
\DeclareMathOperator{\dv}{div}
\DeclareMathOperator{\dive}{div}
\DeclareMathOperator{\curl}{curl}
\title[Incompressible $\al$--Euler outside vanishing disk]{The incompressible $\al$--Euler equations in the exterior of a vanishing disk}
\author[Busuioc \and Iftimie \and Lopes Filho \and Nussenzveig Lopes]{A.V. Busuioc \and D. Iftimie \and M.C. Lopes Filho \and H.J. Nussenzveig Lopes}
\begin{document}

\begin{abstract}
In this article we consider the  $\al$--Euler equations in the exterior of a small fixed disk of radius $\ep$. We assume that the initial potential vorticity is compactly supported and independent of $\ep$, and that the circulation of the unfiltered velocity on the boundary of the disk does not depend on $\ep$. We prove that the solution of this problem converges, as $\ep\to0$, to the solution of a modified $\al$--Euler equation in the full plane where an additional Dirac located at the center of the disk is imposed in the potential vorticity.
\end{abstract}

\maketitle

\section{Introduction}

In this work we study the initial-boundary-value problem for the two-dimensional incompressible  $\al$--Euler equations, $\alpha > 0$ fixed, in the exterior of the small disk $D(0;\ep)= \{x \in \real^2 \; | \; |x| \leq \ep \}$.

Let $\Pi_\ep \equiv \{|x|>\ep\}$. The system we are interested in is given by:
\begin{equation}\label{EulerAlphaVelVare}
\left\{
\begin{array}{ll}
\partial_t \vep + \uep \cdot \nabla \vep + \sum_{j=1}^2 (\vep)_j \nabla (\uep)_j = -\nabla p_\ep, & \text{ in } (0,\infty ) \times \Pi_\ep,\\
\dv \uep = 0,  & \text{ in }  [0,\infty ) \times \Pi_\ep,\\
\vep= \uep - \alpha \Delta \uep,  & \text{ in }  [0,\infty ) \times \Pi_\ep ,\\
\uep = 0,  & \text{ on }  [0,\infty ) \times \{|x|=\ep\},\\
\lim\limits_{|x|\to\infty}\uep(t,x) = 0,  & \text{ for all  }  t\geq0,\\
\uep(0,\cdot)=u_{\ep,0},  & \text{ at }  \{t=0\}  \times \Pi_\ep.
\end{array}
\right.
\end{equation}
Above $\uep$ is called the {\it filtered} velocity while $\vep$ is the {\it unfiltered} velocity.

The $\al$--Euler equations arise in several ways: as the inviscid case of the second-grade fluid model, see \cite{dunn_thermodynamics_1974}, averaging the transporting velocity in the Euler equations at scale $\sqrt{\alpha}$, as the equation for geodesics in the group of volume-preserving diffeomorphisms with a natural metric, see \cite{marsden_geometry_2000}, or as a variant of the vortex blob method, see \cite{oliver_vortex_2001}.

The $\al$-Euler equations are a natural desingularization of the inviscid flow equations, obtained by averaging momentum transport at small scales, away from solid boundaries. In domains with boundary a boundary condition must be imposed; a natural choice is to impose the no-slip condition $u=0$ on the filtered velocity, something which makes the $\al$-Euler equations into a rough analog of the standard initial-boundary value problem for the Navier-Stokes equations. Recent progress has been obtained in understanding the flow-boundary interaction for this $\al$-model, focusing mainly on the vanishing $\al$ limit, see \cite{LNZT15,busuioc_weak_2017,BILN20}.

The present work is part of this program, seeking to identify the limiting behavior of the flow in the exterior of a small obstacle for fixed $\al$. This is inspired by work of Iftimie {\em et al.} in two space dimensions, where this limit was identified both for the Euler and Navier-Stokes equations, see \cite{iftimie_two_2003,ILN2006}. The limit is sharply different in the inviscid and viscous cases. For inviscid flow, the small obstacle leads to a modified Euler system, whereas for viscous flow, it is the initial condition that must be adjusted. Given that the $\al$-Euler model is a regularized inviscid system using the standard viscous boundary condition, it is natural to wonder whether the present limit follows the inviscid pattern, the viscous one, or something else altogether. Our main result is that the limit follows the inviscid pattern.

The proof also follows the structure of the corresponding result for the Euler equations, with additional complications coming from potential theory. Dealing with these complications makes up the bulk of the present work.

Let us note that, taking the two-dimensional curl of \eqref{EulerAlphaVelVare}, which
corresponds to applying the differential operator $\nabla^\perp \cdot$ to the system, gives rise to the {\it potential
vorticity equation}:

\begin{equation} \label{pvorVareWOBdryCond}
\left\{
\begin{array}{ll}
\partial_{t}\pvor + \uep \cdot\nabla\pvor = 0, & \text{ in }  (0,\infty ) \times \Pi_\ep,\\
\dv \uep=0, & \text{ in } [0,\infty ) \times \Pi_\ep,\\
\curl (1-\alpha\Delta)\uep = \curl \vep = \pvor, & \text{ in } [0,\infty ) \times \Pi_\ep\\
\uep = 0,  & \text{ on }  [0,\infty ) \times \{|x|=\ep\},\\
\lim\limits_{|x|\to\infty}\uep(t,x) = 0,  & \text{ for all  }  t\geq0,\\
\pvor(0,\cdot)=q_{\ep,0},  & \text{ at } \{0\}  \times \Pi_\ep.
\end{array}
\right.
\end{equation}

The scalar quantity $\pvor$ is the {\em potential vorticity}. 
%We note that \eqref{pvorVareWOBdryCond} lacks a {\em local boundary condition} at $|x|=\ep$. This is entirely analogous to what occurs for the vorticity formulation of the incompressible Navier-Stokes equations.

We will work with the vorticity equation rather than the velocity equation. This requires a modified Biot-Savart law expressing the velocity $u_\ep$ in terms of the vorticity $q_\ep$. Since the domain we are considering is not simply-connected, we require additional information to determine velocity from vorticity. We will impose a given circulation of the unfiltered velocity $\vep$ on the boundary; we denote this circulation by $\gamma$. This is a conserved quantity for the evolution, as noted in \cite[Lemma 2.3]{busuioc_weak_2017}.
We will see, in Section \ref{sect2}, that the velocity $\uep$ is uniquely determined in terms of  $\qep$ and of $\ga$. We will show that the modified Biot-Savart $\uep=\T(\qep)$ law, which gives the velocity $u_\ep$ in terms of the potential vorticity $q_\ep$ and of the circulation $\gamma$, is:
\begin{equation}\label{BSeq1}
\uep=\T(\qep)\equiv(1+\al\Aep)^{-1}[\K(\qep)+(\ga+m)\H]
\end{equation}
where $\Aep$ is the Stokes operator, $m=\int_{\piep}\qep$ (another conserved quantity), $\H$ is the following harmonic vector field
\begin{equation*}
H=\frac{x^\perp}{2\pi|x|^2}
\end{equation*}
 and $\K(\qep)$ is the classical Biot-Savart law in $\piep$:
\begin{equation}\label{classicalKep}
\K(\qep)(x)=\int_{\piep}\nabla_x^\perp G_\ep(x,y)\qep(y)\,dy.
\end{equation}
Above $G_\ep$ denotes the Green's function for the Laplacian on $\Pi_\ep$ with zero boundary conditions.

The purpose of this article is to prove the following result.

\begin{theorem} \label{mainthm}
Assume that the initial potential vorticity $q_{\ep,0}=q_0\in L^1\cap L^\infty$ is compactly supported outside the origin and independent of $\ep$. Assume, in addition, that the circulation of the unfiltered velocity $v_\ep$ on the boundary of $\piep$ is a constant $\gamma$ independent of $\ep$, so that the velocity $u_\ep$ can be expressed from the potential vorticity with the Biot-Savart law \eqref{BSeq1}: $u_\ep=\T(q_\ep)$. Then
\begin{enumerate}
\item There exists a unique global solution $\qep$ of \eqref{pvorVareWOBdryCond} with $u_\ep=\T(q_\ep)$, such that  $q^\ep\in L^\infty(\R_+;L^1(\piep)\cap L^\infty(\piep))$.
% and $\supp q_\ep(t,\cdot)\subset \{|x|\leq R_0+Mt\}$ for some constants $R_0$ and $M$ which do not depend on $\ep$.
\item Let $\qept$ be the extension of $\qep$ to $\R^2$ which coincides with $\qep$ in $\piep$ and vanishes in $|x|\leq \ep$. Then we have that  $\qept \rightharpoonup q$ weak-$\ast$ in $L^\infty(\R_+;L^1(\R^2)\cap L^\infty(\R^2))$, as $\ep\to 0$, and $q$ is a global solution of the  following system of PDE in the full plane:
\begin{equation} \label{limitsystem} %\label{pvorVareWOBdryCond}
\left\{
\begin{array}{ll}
\partial_{t}q + u \cdot\nabla q = 0, & \text{ in }  (0,\infty ) \times \R^2,\\
\dv u=0, & \text{ in } [0,\infty ) \times\R^2,\\
\curl (1-\alpha\Delta)u = q+\gamma\delta, & \text{ in } [0,\infty ) \times \R^2,\\
q(0,\cdot)=q_{0},  & \text{ at } \{0\}  \times \Pi_\ep,
\end{array}
\right.
\end{equation}
and $u \in L^\infty_{loc}(\R_+;L^p(\R^2))$ for all $p>2$.
\item The limit system \eqref{limitsystem} has at most one global solution $q\in L^\infty(\R_+;L^1(\R^2)\cap L^\infty(\R^2))$.
\end{enumerate}
\end{theorem}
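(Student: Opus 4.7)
To construct the unique $\qep$ on $\piep$ for fixed $\ep>0$, I would work at the vorticity level using the Biot-Savart law $\uep=\T(\qep)$. Since $m=\int_{\piep}\qep$ and the circulation $\ga$ are conserved, \eqref{BSeq1} presents $\uep$ as the image, under the two-derivative-gaining operator $(1+\al\Aep)^{-1}$, of an $L^1\cap L^\infty$ source plus a smooth harmonic field. Standard elliptic estimates in $\piep$ then give that $\uep$ is Lipschitz in space, so the transport equation $\partial_t\qep+\uep\cdot\nabla\qep=0$ admits a unique flow; $\|\qep\|_{L^p}$ is preserved along characteristics and a Picard iteration closes global existence and uniqueness in the stated class.

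\textbf{Plan for part (b).} The heart of the theorem is the uniform-in-$\ep$ analysis. Transport by a divergence-free flow gives for free $\|\qept\|_{L^\infty_t(L^1\cap L^\infty(\R^2))}=\|q_0\|_{L^1\cap L^\infty}$; the main task is a uniform velocity bound in $L^\infty_{loc}(\R_+;L^p_{loc}(\R^2))$ for $p>2$. Using the splitting \eqref{BSeq1}, the harmonic piece $(\ga+m)(1+\al\Aep)^{-1}\H$ can be analyzed essentially explicitly in polar coordinates and admits uniform bounds on compact subsets of $\R^2\setminus\{0\}$. For the vorticity piece $(1+\al\Aep)^{-1}\K(\qep)$, I would prove uniform-in-$\ep$ estimates on $G_\ep$ and on $(1+\al\Aep)^{-1}$ by comparing with the full-plane fundamental solution and controlling the boundary corrector. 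The Dirac $\ga\delta$ in \eqref{limitsystem} arises from the circulation $\ga$ of $\vep$ on $\partial\piep$, which concentrates at the origin as $\ep\to 0$. Equicontinuity in time from the equation, combined with these uniform bounds, yields weak-$\ast$ compactness of $\qept$ and strong local compactness of $\uept$, enough to pass to the limit in the weak formulation of the transport equation and to identify $q$ as a solution of \eqref{limitsystem}.

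\textbf{Plan for part (c).} Uniqueness of the limit system I would prove by a Yudovich-type argument adapted to the Dirac source. By rotational symmetry the velocity $\ga(1-\al\Delta)^{-1}\H$ generated by $\ga\delta$ alone is purely tangential, so its radial component at $0$ vanishes; the velocity generated by any $q\in L^1\cap L^\infty$ via $(1-\al\Delta)^{-1}$ is Lipschitz. Consequently, if $q_0$ is supported away from $0$ then $\supp q(t,\cdot)$ stays uniformly away from $0$ on every bounded time interval, and on that support $u$ is log-Lipschitz in space. This closes a Gronwall estimate on a suitable weak distance between two solutions in Yudovich's classical manner.

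\textbf{Main obstacle.} I expect the hardest step to be the uniform potential-theoretic estimates in part (b): controlling $(1+\al\Aep)^{-1}$, $G_\ep$ and their compositions independently of $\ep$ as the disk shrinks to a point. The logarithmic character of the planar fundamental solutions makes these bounds genuinely delicate, and this is where the bulk of the technical work will plausibly concentrate. The construction in part (a) and the Yudovich argument in part (c) are, by contrast, largely standard once the right estimates are in hand.
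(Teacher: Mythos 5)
Your plans for parts (a) and (b) follow the same broad architecture as the paper (iteration for fixed $\ep$; uniform estimates via the splitting of $\T(\qep)$ into a vorticity piece and harmonic pieces; compactness in time from the equation), but part (b) as you describe it has a genuine gap. Uniform bounds in $L^\infty_{loc}(\R_+;L^p_{loc})$, $p>2$, for $\uept$ do not give ``strong local compactness of $\uept$'': the velocity is a nonlocal function of $\qep$ at each time, and without a uniform derivative estimate there is no spatial compactness, so you cannot pass to the limit in $\dive(\uept\qept)$ by pairing with the merely weak-$\ast$ convergent $\qept$. What the paper actually proves is a uniform $H^1(\Pi_\ep)$ bound on $\uep-\ga K^\al-m\Hi$, where $K^\al=\mathcal{G}_\al\ast H$; the limit in the nonlinear term is then taken by pairing weak $H^1$ convergence of $\uept$ against strong $C^0_tH^{-1}_{loc}$ convergence of $\qept$. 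Moreover, ``uniform bounds on compact subsets of $\R^2\setminus\{0\}$'' for the harmonic piece are not enough: identifying the Dirac and the global $L^p(\R^2)$ membership of $u$ requires the quantitative global comparison $\|(1+\al\Aep)^{-1}H-K^\al\|_{H^1(\Pi_\ep)}=O(\ep|\log\ep|)$ (Proposition \ref{Hpart}), which the paper obtains by exploiting the radial structure, reducing to a Neumann problem for $F=\curl\bigl((1+\al\Aep)^{-1}H-K^\al\bigr)$, extending $F$ to $\R^2$ and solving explicitly with $\mathcal{G}_\al$. This is the technical core you correctly anticipate is hard, but the estimate you would need is stronger than the one you propose to prove.

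Part (c) is where your route would fail. Your argument rests on the claim that $\supp q(t,\cdot)$ stays uniformly away from the origin because the singular field $\ga K^\al$ is purely tangential and the regular field $K^\al\ast q$ is Lipschitz. The tangential field indeed does not change $|X(t)|$, but the regular field has no reason to have vanishing radial component near the origin; the best one gets is $\frac{\d}{\d t}|X(t)|\geq -\|K^\al\ast q\|_{L^\infty}$, so the support can reach the origin in finite time and the log-Lipschitz-on-the-support argument collapses there. The paper avoids this entirely: it runs an $L^2$ energy estimate on $\vb=H\ast(q-q')$ (an $\dot H^{-1}$-type norm of the vorticity difference, exactly as in the fixed-$\ep$ contraction of Section \ref{sect3}), and the only derivatives of $u$ that appear, via the identity $h^\perp\curl h=\binom{\partial_2}{\partial_1}(h_1h_2)+\binom{-\partial_1}{\partial_2}\frac{h_2^2-h_1^2}{2}$, are the combinations $\partial_2u_1+\partial_1u_2$ and $\partial_1u_1-\partial_2u_2$. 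Lemma \ref{Kal}(c) shows these combinations are \emph{globally bounded} for $K^\al$ — the logarithmic singularity of $\nabla K^\al$ cancels in exactly these symmetric traceless components — so Gronwall closes with no information about the support whatsoever. If you want to salvage a Yudovich-style proof you would need either to prove the no-concentration-at-the-origin property (a delicate statement in its own right, analogous to open questions for the vortex-wave system) or to find the cancellation \eqref{newest}, which is the actual key idea here.
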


We note that the limit system \eqref{limitsystem} above is not the $\al$--Euler equations in $\R^2$. Indeed, one could consider the $\al$--Euler equations in $\R^2$
with initial potential vorticity $\overline{q}_0=q_0+\ga\delta$. Such an initial data is a bounded measure so it produces a unique global solution $\qb=q+\gamma\delta_{z(t)}$ with $z(0)=0$, see \cite{oliver_vortex_2001}. It is easy to see that the PDE for the regular part $q$ is the same as \eqref{limitsystem} except that one has to write $\delta_{z(t)}$ instead of $\delta$. In contrast with \eqref{limitsystem}, for the $\al$--Euler equations the position $z(t)$ of the discrete part is no longer constant and must evolve along the trajectories of the velocity associated to the regular part $q$. So, although these two equations are very similar, they are not the same.
%Presumably the proof of the uniqueness of solutions of the $\al$--Euler equations with an initial vorticity given by a bounded measure, see \cite{oliver_vortex_2001}, can be extended to the system \eqref{limitsystem}. If this is indeed the case, then we would obtain the convergence of the whole sequence $\qep$ instead of just a sub-sequence.

The plan of this paper is the following. In Section \ref{sect2} we introduce notation, we deduce the modified Biot-Savart law \eqref{BSeq1} and we establish some $\ep$-dependent estimates. Global existence and uniqueness for system \eqref{pvorVareWOBdryCond}, for fixed $\ep$, is shown in Section \ref{sect3}. Next, we prove estimates uniform in $\ep$ in Section \ref{sect4}. The convergence result is the subject of Section \ref{sect5}. Finally, the uniqueness of solutions of \eqref{limitsystem} is shown in Section \ref{sect6}.

\section{Notation, modified Biot-Savart law and  preliminary estimates}
\label{sect2}

We begin by introducing basic notation. Given a Banach space $X$ of vector fields on $\Pi_\ep$ we denote by  $X_{\sigma}$ the subspace of $X$ consisting of divergence-free vector fields in $X$ which are tangent to the boundary.

Recall that $\Pi_\ep$ is the exterior of the disk of radius $\ep$. We use the subscript $\ep$ to denote the dependence of solutions of the  $\al$--Euler equations on the domain, as in $\uep$, $\vep$, $\pvor$. We will assume that $\ep$ is small.

We choose a smooth radial function $\eta\in C^\infty(\R^2;[0,1])$ such that $\eta(x)=0$ for \label{eta} all $|x|\leq 1$ and $\eta(x)=1$ for all $|x|\geq 2$. We define
\begin{equation}\label{defH}
H=\frac{x^\perp}{2\pi|x|^2}
\end{equation}
and
 \begin{equation*}
\Hi\equiv\eta(x)H.
\end{equation*}
The radial symmetry of $\eta$ implies that $\Hi$ is divergence free.

We denote by $\Pep$ the {\em Leray projector} in $\piep$, i.e. the orthogonal projection from $L^2(\Pi_\ep)$ to $L^2_{\sigma}(\Pi_\ep)$. It is well known that the Leray projector can be extended by density to a continuous projection from $L^p(\Pi_\ep)$ to $L^p_{\sigma}(\Pi_\ep)$, for all $1< p < \infty$. The Stokes operator on $\piep$ with homogeneous Dirichlet boundary conditions is denoted by $\A_\ep=-\Pep\Delta$. When we apply these operators, or their inverses, to functions defined on the whole $\R^2$ we mean to apply them to the restrictions of these functions to $\piep$.

We will use the following result about the inverse of the operator $1+\al\Aep$.
\begin{proposition}\label{Stokes}
The operator $(1+\al\Aep)^{-1}$ is bounded from $L^p_\sigma(\Piep)$ to $W^{2,p}(\Piep)\cap W^{1,p}_0(\Piep)\cap L^p_\sigma(\Piep)$ for any $1<p<\infty$ and from $L^\infty_\sigma(\Piep)$ to $W^{1,\infty}(\Piep)$. In addition, there exists a universal constant $C_0$ such that
\begin{equation}
\label{linfb}
\|(1+\al\Aep)^{-1}f\|_{L^\infty(\Piep)}\leq C_0 \|f\|_{L^\infty(\Piep)}\quad\text{for all }f\in L^\infty_\sigma(\Piep).
\end{equation}
\end{proposition}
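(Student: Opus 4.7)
The plan is to deduce all three statements from classical theory for the Stokes resolvent equation, with the main effort going into the $\ep$-uniform constant $C_0$ in \eqref{linfb}. Setting $u := (1+\al\Aep)^{-1} f$, the equation to solve is, for some scalar pressure $p$,
\[
u - \al \Delta u + \nabla p = f, \quad \dv u = 0 \text{ in } \piep, \quad u|_{\partial \piep} = 0, \quad u(x) \to 0 \text{ as } |x|\to\infty.
\]
For the first item with $1<p<\infty$, I would invoke the classical $L^p$ theory for the Stokes resolvent on exterior domains with $C^2$ boundary (Cattabriga, Solonnikov, Giga--Sohr). The resulting $W^{2,p}$ estimate is a consequence of the Agmon--Douglis--Nirenberg ellipticity of the Stokes system; together with the boundary and divergence-free conditions it delivers $u\in W^{2,p}\cap W^{1,p}_0\cap L^p_\sigma$.

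For the $L^\infty$ bound \eqref{linfb}, I would split $u=w+v$. Let $\tilde f$ denote the extension of $f$ by zero to $\R^2$, and let $\Phi_\al$ be the fundamental solution of $1-\al\Delta$ in $\R^2$, namely $\Phi_\al(x)=(2\pi\al)^{-1}K_0(|x|/\sqrt{\al})$. Set $w:=\Phi_\al\ast\tilde f$. Since $f\in L^\infty_\sigma(\piep)$ has vanishing normal trace on $\partial\piep$, a short distributional check shows that $\tilde f$ is divergence-free on $\R^2$, hence $\dv w=0$. Moreover $\Phi_\al\ge 0$ and $\int_{\R^2}\Phi_\al=1$ (which follows from $(1-\al\Delta)1=1$), so
\[
\|w\|_{L^\infty(\R^2)}\leq\|f\|_{L^\infty(\piep)}.
\]
The correction $v:=u-w$ solves the homogeneous Stokes resolvent problem on $\piep$ with Dirichlet data $-w|_{\partial\piep}$, whose $L^\infty$ norm is controlled by $\|f\|_{L^\infty}$.

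The key step, and the main obstacle, is an $\ep$-uniform $L^\infty$ bound on $v$ in terms of the $L^\infty$ norm of its boundary data. Here I would exploit the radial symmetry of $\piep$: expanding the boundary trace in Fourier modes $e^{ik\theta}$ on $\{|x|=\ep\}$ and solving the homogeneous Stokes resolvent mode by mode reduces the problem to explicit combinations of modified Bessel functions $K_{|k|}(r/\sqrt{\al})$. The small-argument asymptotics of $K_k$ are tame enough that summing in $k$ yields a bound with a universal constant, independent of $\ep$, giving the desired $C_0$. Once \eqref{linfb} is in hand, the $L^\infty_\sigma\to W^{1,\infty}$ mapping property follows by combining the $L^\infty$ bound with Schauder-type $C^{1,\gamma}$ regularity for the Stokes system with bounded right-hand side, or equivalently with interior $W^{2,p}$ estimates for $p>2$ and the embedding $W^{2,p}(\piep)\hookrightarrow W^{1,\infty}(\piep)$.
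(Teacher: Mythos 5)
Your reduction is sound up to the last step: the splitting $u=w+v$ with $w=\Phi_\al\ast\tilde f$ is legitimate (indeed $\tilde f$ is solenoidal in $\R^2$, $\|w\|_{L^\infty}\le\|f\|_{L^\infty}$ since $\Phi_\al$ is a probability density, and $v$ solves the homogeneous Stokes resolvent problem with Dirichlet data $-w|_{\partial\Pi_\ep}$). But the entire difficulty of the proposition is concentrated in the step you leave as an assertion: an $\ep$-uniform maximum-modulus estimate for the homogeneous Stokes resolvent problem on $\Pi_\ep$ in terms of the sup of the boundary data. This is not a routine Bessel-function computation. A mode-by-mode bound of the form $|c_k|\le C\|$data$\|_{L^\infty}$ does not sum over $k$ when the boundary trace is merely bounded; to conclude you would need to exhibit the Poisson-type kernel for the exterior Stokes resolvent on the disk and prove it is in $L^1$ of the circle uniformly in $\ep$ (and in the rescaled parameter $\al/\ep^2$, which blows up). Moreover, the Stokes system does not obey a naive maximum principle because of the pressure: each Fourier mode of $v$ couples a Bessel part $K_{|k|}(r/\sqrt\al)$ coming from the vorticity with a harmonic part $r^{-|k|}$ coming from $\nabla p$, and controlling the resulting $2\times2$ linear systems uniformly in $k$, $\ep$ is precisely the content of the hard estimate. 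As written, the key claim is equivalent to re-proving (for the disk exterior) the $L^\infty$ resolvent bound of Abe--Giga--Hieber, so the proof is incomplete at its crux.

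For comparison, the paper takes a much shorter route that avoids this computation entirely: it quotes the $L^\infty_\sigma\to L^\infty$ resolvent estimate of Abe--Giga--Hieber, whose constant may depend on the domain but is uniform in the resolvent parameter $\al$, and then rescales $x=\ep x'$, which turns $(1+\al\A_\ep)^{-1}$ on $\Pi_\ep$ into $(1+\tfrac{\al}{\ep^2}\A_1)^{-1}$ on the fixed domain $\Pi_1$. The domain dependence of the constant is thereby traded for dependence on the parameter $\al/\ep^2$, in which the cited estimate is already uniform; hence $C_0$ is universal. If you want to keep your explicit approach, the missing ingredient is a genuine kernel estimate for the boundary-value correction $v$; otherwise the scaling argument is the efficient way to obtain $\ep$-independence.
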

\begin{proof}
The statement regarding $L^p_\sigma$ can be found in \cite[Corollary 5]{hieber_stokes_2018} and the boundedness from $L^\infty_\sigma$ to $W^{1,\infty}$ is proved in \cite{abe_stokes_2015}.

To prove the bound \eqref{linfb} we observe first that it was already proved in \cite{abe_stokes_2015} that \eqref{linfb} holds true with a constant $C_0=C_0(\ep)$ which may depend on $\ep$ but not on $\al$. An immediate scaling argument shows that it is also independent of $\ep$.  Indeed, changing variables $x=\ep x'$ the operator $1+\al\Aep$ becomes $1+\frac\al{\ep^2}\A_1$. So we can apply the result of \cite{abe_stokes_2015} to the operator $(1+\frac\al{\ep^2}\A_1)^{-1}$ on $L^\infty_\sigma(\Pi_1)$ and find a universal constant $C_0$ as an upper bound for its norm as a bounded operator on $L^\infty_\sigma(\Pi_1)$. By the rescaling performed, this universal constant $C_0$ is also an upper bound for the norm of $(1+\al\Aep)^{-1}$ in  $L^\infty_\sigma(\Piep)$.
\end{proof}

The integral of the potential vorticity is a constant of motion. Indeed,
\[
\frac{\d}{\d t}\int_{\Pi_\ep} \pvor(t,x)\,\d x = \int_{\Pi_\ep} \partial_t \pvor \,\d x = -\int_{\Pi_\ep} \dv (\uep  \pvor ) \,\d x =0,
\]
since $\uep$ vanishes at $|x|=\ep$. We denote
\begin{equation*}
m\equiv \int_{\Pi_\ep} q_\ep=\int_{\Pi_\ep} q_{0}.
\end{equation*}

% We introduce
% \begin{equation*}%\label{Wvare}
% \W_\ep \equiv \K_\ep[q_\ep]+\beta \Hep
% \end{equation*}
% and

% The introduce the

% We will henceforth assume that the initial potential vorticity $q_{\ep,0}$ does not depend on $\ep$ and is a smooth ($C^\infty$) function whose support is compact. % and does not intersect the origin in $\real^2$
% Under these hypotheses, for each $\ep >0$, there exists one and only one smooth solution $\uep$ of \eqref{pvorVareWOBdryCond}, see ????.

We will later need some detailed information on the operator $(1 - \alpha \Delta)^{-1}$ in the full plane. This is a convolution operator against a kernel, denoted $\mathcal{G}_\alpha$, which is a re-scaled  {\em Bessel potential}. The classical Bessel potential from Harmonic Analysis, $\mathcal{J}_2$, is the kernel for $(1-\Delta)^{-1}$, and we have
\begin{equation*}%\label{GalphaJ2}
 \mathcal{G}_\alpha(x) = \frac{1}{\alpha} \mathcal{J}_2\left(\frac{x}{\sqrt{\alpha}}\right).
\end{equation*}

We will make use of the following properties of $\mathcal{G}_\alpha$, deduced from those satisfied by $\mathcal{J}_2$; the first three can be found in Chapter V.3.1 of \cite{stein_singular_1970}, and, for the fourth property, see \cite{Watson}, page 80, relation (14).

\begin{enumerate}
\item[(P1)] $\mathcal{G}_{\alpha}$ is radially symmetric; i.e. $\mathcal{G}_\alpha(x)=g_\alpha(|x|)$ for some
$g_\alpha = g_\alpha(r)$;
\item[(P2)] $g_{\alpha}$ is positive and
\begin{equation} \label{galphaposandint1}
\int_{\real^2} \mathcal{G}_\alpha =  2\pi \int_0^\infty sg_\alpha (s) \, ds = 1;
\end{equation}
\item[(P3)] $\mathcal{G}_{\alpha}$ decays  exponentially at infinity, i.e. for any  $M>0$, there exist positive constants
$c_{1}$ and $c_{2}$ such that
\begin{equation*}% \label{Galphaexpdecay}
g_{\alpha}(|x|) \leq  c_{1} e^{-c_{2}|x|}, \mbox{   whenever } \ |x|>M;
\end{equation*}
\item[(P4)] $\mathcal{G}_\alpha$ has a logarithmic singularity at $0$, i.e., there exists $c_3  \in \real$, such that
\begin{equation} \label{Galphalogat0}
g_{\alpha}(|x|) = c_3 \log|x| + \mathcal{O}(1), \text{ as } |x| \to 0.
\end{equation}
Additionally, $g_\alpha$ is bounded for $|x|\geq 1/2$.
\end{enumerate}

Let us introduce now the following kernel
\begin{equation} \label{Kaldef}
K^\al=\mathcal{G}_\alpha \ast \H.
\end{equation}
We will need, later, the following estimates for  $K^\al$.
\begin{lemma}\label{Kal}
There exists a constant  $C>0$, which depends only on $\al$, such that:
\begin{enumerate}
\item If $|x|<1/2$ we have:
$$|K^\al(x)|\leq C|x|\bigl|\log|x|\bigr|\quad\text{and}\quad |\nabla K^\al(x)|\leq C\bigl|\log|x|\bigr|.$$
\item For all $x\in\R^2$, we have that $|K^\al(x)|\leq C/(1+|x|)$.
\item We have that
\begin{equation}\label{newest}
|\partial_2K^\al_1(x)+\partial_1K^\al_2(x)|\leq C\quad\text{and}\quad |\partial_1K^\al_1(x)-\partial_2K^\al_2(x)|\leq C
\end{equation}
for all $x\in\R^2$.
\end{enumerate}
\end{lemma}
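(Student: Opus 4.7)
The cornerstone is that $H=\nabla^\perp G$ where $G(x)=\frac{1}{2\pi}\log|x|$ is the Newtonian kernel, so
\[
K^\al=\mathcal{G}_\alpha*\nabla^\perp G=\nabla^\perp\Psi,\qquad \Psi:=\mathcal{G}_\alpha*G.
\]
Since $(1-\al\Delta)\mathcal{G}_\alpha=\delta_0$ gives $(1-\al\Delta)\Psi=G$, while $\Delta G=\delta_0$ gives $\Delta\Psi=\mathcal{G}_\alpha$, subtracting yields the clean identity
\[
\Psi=G+\al\mathcal{G}_\alpha,\qquad K^\al=H+\al\nabla^\perp\mathcal{G}_\alpha.
\]
All three assertions will be extracted from this identity by tracking, near the origin, the cancellation between $H$ and $\al\nabla^\perp\mathcal{G}_\alpha$.

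For part (b) I use the convolution directly. If $|x|\le 1$ the bound follows from part (a) (proved below) since $|x|\bigl|\log|x|\bigr|$ is bounded on $[0,1]$. For $|x|\ge 1$ I split
\[
K^\al(x)=\Bigl(\int_{|y|<|x|/2}+\int_{|y|\ge|x|/2}\Bigr)\mathcal{G}_\alpha(x-y)H(y)\,dy;
\]
on the first region $|x-y|\ge|x|/2$, so (P3) gives an exponentially small contribution, while on the second region $|H(y)|\le 1/(\pi|x|)$ and $\int\mathcal{G}_\alpha=1$ by (P2), producing the $C/|x|$ bound.

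For (a) and (c) I sharpen (P4) by combining the Bessel representation $\mathcal{G}_\alpha(x)=\frac{1}{2\pi\al}K_0(|x|/\sqrt\al)$ with the series expansion of $K_0$ from \cite{Watson}, which yields, as $r\to 0$,
\[
g_\alpha(r)=-\tfrac{1}{2\pi\al}\log r+a_0+r^2\bigl(a_1\log r+a_2\bigr)+O(r^4\log r).
\]
The identification $c_3=-\tfrac{1}{2\pi\al}$ forces $\al\nabla^\perp\mathcal{G}_\alpha(x)=-H(x)+O(|x|\log|x|)$, giving the first bound of (a). Differentiating once more, the $1/r^2$ part of the Hessian of $\al\mathcal{G}_\alpha$ cancels that of $G$, leaving a residual of size $\log|x|$, which yields the second bound of (a). For (c) I rewrite the two quantities as $(\partial_1^2-\partial_2^2)\Psi$ and $-2\partial_1\partial_2\Psi$, using $\dive K^\al=0$; because $\mathcal{G}_\alpha$ is radial, the $r^2\log r$ term in the expansion contributes to $\partial_i\partial_j\Psi$ only through a piece proportional to $\delta_{ij}\log r$ (plus bounded terms), so it is annihilated by $\partial_1^2-\partial_2^2$ and never appears in $\partial_1\partial_2$. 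Away from the origin, (P3) and the pointwise decay of $\nabla^2 G$ make everything bounded.

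The main obstacle is part (c): the two leading singular terms of the Hessian cancel by the very definition of $c_3$, but a $\log r$ residual survives in the generic entry $\partial_i\partial_j\Psi$, and one needs the structural observation that this residual sits only in the scalar $\delta_{ij}$-part of the Hessian—because $\mathcal{G}_\alpha$ is radial—to conclude that the symmetric-traceless and off-diagonal combinations singled out in the statement are bounded.
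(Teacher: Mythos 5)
Your proof is correct, but it follows a genuinely different route from the paper's. The paper starts from the closed-form radial representation $K^\al(x)=\frac{x^\perp}{|x|^2}\int_0^{|x|}sg_\al(s)\,ds$ (quoted from the Ambrose--Lopes Filho--Nussenzveig Lopes confinement paper), so that every assertion reduces to estimating $g_\al$ and its radial average $\frac{2}{|x|^2}\int_0^{|x|}sg_\al$ via (P2)--(P4); part (b) is simply cited from that reference, and the cancellation in part (c) appears as $g_\al(|x|)-\frac{2}{|x|^2}\int_0^{|x|}sg_\al(s)\,ds=O(1)$. You instead derive the stream-function identity $K^\al=H+\al\nabla^\perp\mathcal{G}_\alpha$ from the defining PDEs, locate the cancellation between the Newtonian singularity of $H$ and the leading singularity of $\al\nabla^\perp\mathcal{G}_\alpha$ via $c_3=-\frac{1}{2\pi\al}$, and explain part (c) structurally: the surviving $\log$ residual in the Hessian of the radial stream function $\Psi=G+\al\mathcal{G}_\alpha$ sits entirely in the pure-trace $\delta_{ij}$ component coming from the $r^2\log r$ term, hence is killed by $\partial_1^2-\partial_2^2$ and absent from $\partial_1\partial_2$. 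Your version is more self-contained (it proves (b) by splitting the convolution rather than citing it) and makes the algebraic reason for \eqref{newest} transparent, but it pays two small prices: it needs the refined two-term Bessel expansion of $K_0$ (the $r^2\log r$ term), since differentiating the bare (P4) statement is not legitimate, and for the global bound in (c) it needs exponential decay of the \emph{derivatives} of $g_\al$ at infinity, which (P3) as stated does not supply (though it is standard for $K_0$ and available from the Watson reference you invoke). The paper's formula sidesteps both issues because differentiating it reintroduces only $g_\al$ itself, never $g_\al'$. One cosmetic point: in part (b) you cover $|x|\le 1$ by appealing to part (a), which is stated only for $|x|<1/2$; the band $1/2\le|x|\le1$ needs the one-line extra remark that $H$ and $\nabla\mathcal{G}_\alpha$ are bounded there.
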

\begin{proof}
There is a simple way to express $K^\al(x)$ in terms of $g_\al$, see for instance \cite[page 5476]{ambrose_confinement_2018-1}, namely:
\begin{equation}\label{Kalform}
K^\al(x)=\frac{x^\perp}{|x|^2}\int_0^{|x|}sg_\al(s)\,ds.
\end{equation}
Let $x$ be such that $|x|<1/2$. It follows from property (P4), \eqref{Galphalogat0}, that
\begin{equation*}
|K^\al(x)|\leq\frac{C}{|x|}\int_0^{|x|}s|\log s|\,ds\leq C|x|\bigl|\log|x|\bigr|,
\end{equation*}
for some constant $C>c_3$.

Differentiating \eqref{Kalform} and estimating the result as above gives the gradient estimate and proves part a).

The bound $C/(1+|x|)$ in part b) follows from the estimates given in \cite[page 5476]{ambrose_confinement_2018-1}.

To prove part c) we compute $\partial_2K^\al_1+\partial_1K^\al_2$ using \eqref{Kalform}. We find, after some calculations, that:
\begin{equation*}
\partial_2K^\al_1(x)+\partial_1K^\al_2(x)=\frac{x_1^2-x_2^2}{|x|^2}\left[ g_\al(|x|)-\frac2{|x|^2}\int_0^{|x|}s g_\al(s)\,ds \right].
\end{equation*}
We use again (P4), \eqref{Galphalogat0}, to obtain that
\begin{multline*}
g_\al(|x|)-\frac2{|x|^2}\int_0^{|x|}s g_\al(s)\,ds \\ = c_3\log(|x|)-c_3\frac2{|x|^2}\int_0^{|x|}s \log(s)\,ds+O(1)\\
= O(1)\text{ as } |x|\to 0.
\end{multline*}
It follows that $\partial_2K^\al_1+\partial_1K^\al_2$ is bounded for small $x$. The global bound is a consequence of property (P3) and the boundedness of $g_\alpha$ for $|x|\geq 1/2$.  The corresponding estimate for $\partial_1K^\al_1-\partial_2K^\al_2$ follows in the same manner.
\end{proof}

We will now deduce the modified Biot-Savart law, which expresses the velocity $\uep$ in terms of the potential vorticity $\qep$ and the circulation of $\vep$ around the boundary of $\piep$. Our point of departure is the following elliptic system, which relates potential vorticity to the unfiltered velocity:
\[
\left\{
\begin{array}{ll}
\dv \vep=0, & \text{ in } [0,\infty ) \times \Pi_\ep,\\
\curl \vep = \pvor, & \text{ in } [0,\infty ) \times \Pi_\ep.\\
\end{array}
\right.
\]

We recall that, above, $\vep = (1-\alpha\Delta)\uep $, and $\uep$ satisfies the boundary condition
\[
\uep = 0,  \text{ on }  [0,\infty ) \times \{|x|=\ep\}.
\]

 Then, since $\Pep \vep$ and $\vep$ differ by a gradient, it follows easily that
\[
\left\{
\begin{array}{ll}
\dv (\Pep\vep)=0, & \text{ in } [0,\infty ) \times \Pi_\ep,\\
\curl (\Pep \vep) = \pvor, & \text{ in } [0,\infty ) \times \Pi_\ep,\\
\Pep \vep \cdot \widehat{\n} = 0 & \text{ on } [0,\infty ) \times \{|x|=\ep\}.
\end{array}
\right.
\]

The system above was studied in detail in \cite{iftimie_two_2003}. It was shown, see \cite[page 358]{iftimie_two_2003}, that there exists $\beta_\ep = \beta_\ep (t)\in \real$ for which
\[\Pep \vep = \K(\pvor) + \beta_\ep (t) \Hep,
\]
where the operator $\K (\pvor)$ is the classical Biot-Savart law defined in \eqref{classicalKep}, and $\H$  is the generator of the harmonic vector fields in $\Pi_\ep$ with unit circulation defined in \eqref{defH}.

We now argue that  $\beta_\ep=\ga+m $, where $\ga$ is the circulation of the unfiltered velocity $\vep$ on the boundary of $\piep$ and $m$ is the mass of vorticity: $m=\int_{\piep}\qep$.  Following the proof of \cite[Lemma 3.1]{iftimie_two_2003} we have that
\[\beta_\ep = \int_{\{| x |=\ep\}} \Pep \vep \cdot \d s + \int_{\Pi_\ep} \pvor \, \d x.\]
First observe that  $\vep = \Pep \vep + \nabla Q$, for some smooth function $Q$, and that
\[\int_{\{| x |=\ep\}} \nabla Q \cdot \d s = 0,
\]
since $\{|x |=\ep\}$ is a closed curve. Therefore
\[\int_{\{| x |=\ep\}} \Pep \vep \cdot \d s = \int_{\{|x|=\ep\}} \vep \cdot \d s \equiv \gamma,
\]
which is a conserved quantity, see \cite[Lemma 2.3]{busuioc_weak_2017}.
% To see this we omit the superscipt $\ep$ and calculate:
% \[
% \frac{\d}{\d t}\int_{\{|x|=\ep\}} \vep \cdot \d s =\int_{\{| x |=\ep\}} \partial_t \vep \,\cdot \d s =
% -\int_{\{| x |=\ep\}} (\uep\cdot\nabla \vep + \sum_{j=1}^2 v_j\nabla u_j + \nabla p)\,\cdot \d s\]
% \[=-\int_{\{| x |=\ep\}} (\uep\cdot\nabla \vep + \sum_{j=1}^2 v_j\nabla u_j ) \cdot \d s ,
% \]
% because the integral of a gradient along a closed curve is zero,
% \[=-\int_{\{| x |=\ep\}}  \sum_{j=1}^2 v_j\nabla u_j  \cdot \d s ,\]
% due to $\uep$ vanishing identically on $|x|=\ep$,
% \[=0,\]
% since $\nabla u_j \cdot \d s$ is a tangential derivative of $u_j$ on the boundary, and hence it too vanishes.

% Next we note that
%  \[ \int_{\Pi_\ep} \pvor(t,x)\,\dx\]
% is also a conserved quantity.

In summary, we have shown that
\begin{equation*}%\label{Pvvare}
 \Pep \vep = \K(\pvor) + (\ga+m) \H.
\end{equation*}
We regard the expression on the right-hand-side above as an operator acting on $\qep$, which we denote by $\S$:
\begin{equation*}
\qep \mapsto \S(\qep)\equiv \K(\qep)+(\gamma+m)\H.
\end{equation*}

%Then $\S(q)$ is a divergence free vector field, tangent to the boundary, its curl is $q$ and its circulation along the boundary is $\gamma$. We observe that $\Pep v$ has the same properties. Indeed, it differs from $v$ by a gradient so it has the same curl and circulation as $v$. We conclude that $\S(q)=\Pep v$.

Then
\begin{equation*}
\S(\qep)=\Pep \vep=\Pep(\uep-\al\Delta \uep)=\uep+\al\Aep \uep.
\end{equation*}

Inverting the operator $1+\al\Aep$ allows to deduce the following modified Biot-Savart law
\begin{equation}\label{mBS}
\uep=\T(\qep)=(1+\al\Aep)^{-1}\S(\qep)=(1+\al\Aep)^{-1}[\K(\qep)+(\gamma+m)\H].
\end{equation}

In the following Proposition we collect some estimates related to this modified Biot-Savart law.
\begin{proposition}\label{h1boundT}
For all $q\in L^1(\Piep)\cap L^\infty(\Piep)$ such that $\int_{\Piep}q=m$ we have that $\T(q)\in W^{1,\infty}(\Piep)$ and
\begin{equation}\label{bounduinfty}
\|\T(q)\|_{W^{1,\infty}(\Piep)}\leq C_1(\|q\|_{L^1(\Piep)\cap L^\infty(\Piep)} +|\gamma|),
\end{equation}
where the constant $C_1>0$ depends only on $\al$ and $\ep$. If we assume, in addition, that $\supp q\subset \{|x|\leq R\}$ for some finite $R$, then we also have $\T(q)-(\gamma+m)\H\in W^{2,p}(\Piep)$ for all $1<p<\infty$ and
\begin{equation*}
\|\T(q)-(\gamma+m)\H\|_{W^{2,p}(\Piep)}\leq C_2 (\|q\|_{L^1(\Piep)\cap L^\infty(\Piep)} +|\gamma|),
\end{equation*}
where the constant $C_2>0$ depends only on $\al,\ep,R$ and $p$.
\end{proposition}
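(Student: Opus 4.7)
The strategy is to apply the two boundedness statements of Proposition \ref{Stokes}: the $L^\infty_\sigma\to W^{1,\infty}$ bound for the first estimate, and the $L^p_\sigma\to W^{2,p}$ bound for the second, combined with standard estimates on $\K(q)$ and $\H$. For the first bound, I would write $\T(q)=(1+\al\Aep)^{-1}\S(q)$ with $\S(q)=\K(q)+(\ga+m)\H$, and control $\|\S(q)\|_{L^\infty(\Piep)}$ using the elementary facts $|m|\leq\|q\|_{L^1}$, $\|\H\|_{L^\infty(\Piep)}=1/(2\pi\ep)$, and $\|\K(q)\|_{L^\infty(\Piep)}\leq C\|q\|_{L^1\cap L^\infty}$ (the latter from the standard splitting of the Biot--Savart kernel $\nabla_x^\perp G_\ep$ into a local piece with logarithmic singularity, handled by $\|q\|_{L^\infty}$, and a far-field piece handled by $\|q\|_{L^1}$). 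Proposition \ref{Stokes} then yields \eqref{bounduinfty}.

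For the second estimate, the essential difficulty is that $\H\notin L^p(\Piep)$ for $p\leq 2$, so $(1+\al\Aep)^{-1}$ cannot be applied to $\S(q)$ directly via the $L^p$ theory in that range. Instead I would decompose, using linearity,
\[
\T(q)-(\ga+m)\H=(1+\al\Aep)^{-1}\K(q)+(\ga+m)\bigl[(1+\al\Aep)^{-1}\H-\H\bigr],
\]
and estimate each term separately. For the first term, the compactness $\supp q\subset\{|x|\leq R\}$ combined with the explicit exterior Green's function
\[
G_\ep(x,y)=\tfrac{1}{2\pi}\log|x-y|-\tfrac{1}{2\pi}\log\bigl|x-\ep^2 y/|y|^2\bigr|-\tfrac{1}{2\pi}\log(|y|/\ep)
\]
produces a cancellation giving $\nabla_x G_\ep(x,y)=O(|x|^{-2})$ as $|x|\to\infty$ uniformly in $|y|\leq R$. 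Hence $\K(q)(x)=O(|x|^{-2})$ at infinity and $\K(q)\in L^p_\sigma(\Piep)$ for every $p>1$ (note that $\K(q)$ is divergence-free and tangent to the boundary because its stream function vanishes on $\{|x|=\ep\}$), with $\|\K(q)\|_{L^p}\leq C(\ep,R,p)\|q\|_{L^1\cap L^\infty}$. The $L^p_\sigma\to W^{2,p}$ statement of Proposition \ref{Stokes} then handles this piece.

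For the second, harder piece, set $W=(1+\al\Aep)^{-1}\H$, which belongs to $W^{1,\infty}(\Piep)$ by part (a). Because $\Delta\H=0$ in $\Piep$, the difference $Z=W-\H$ is divergence-free, equals $-\H$ on $\{|x|=\ep\}$, and satisfies the homogeneous generalized Stokes system $Z-\al\Delta Z=\nabla P$ in $\Piep$ for some pressure $P$, with smooth $q$-independent boundary data. Interior Stokes regularity gives smoothness up to the boundary, while the exponential decay of the fundamental solution of $1-\al\Delta$ (property (P3) for $\mathcal{G}_\al$) forces $Z$ to decay exponentially at infinity. Combining these, $Z\in W^{2,p}(\Piep)$ for every $p>1$ with a constant depending only on $\al$ and $\ep$; multiplying by $|\ga+m|\leq|\ga|+\|q\|_{L^1}$ finishes the bound. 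The main obstacle is precisely the rigorous justification of the exponential decay of $Z$ in the exterior of the disk: this step is not covered by the abstract $L^p_\sigma\to W^{2,p}$ theory and must be treated separately, for instance by a Fourier decomposition on $\{|x|=\ep\}$ producing a sum of modified Bessel functions $K_n(r/\sqrt{\al})$, each of which decays exponentially as $r\to\infty$.
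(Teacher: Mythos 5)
Your overall architecture coincides with the paper's: the first estimate is obtained exactly as in the paper (bound $\|\K(q)+(\ga+m)\H\|_{L^\infty}$ by $C(\|q\|_{L^1\cap L^\infty}+|\ga|)$ and apply the $L^\infty_\sigma\to W^{1,\infty}$ part of Proposition \ref{Stokes}), and for the second estimate you use the same decomposition $\T(q)-(\ga+m)\H=(1+\al\Aep)^{-1}\K(q)+(\ga+m)\bigl[(1+\al\Aep)^{-1}\H-\H\bigr]$, with the $\K(q)$ piece handled, as in the paper, via the $O(|x|^{-2})$ decay of the exterior Biot--Savart kernel and the $L^p_\sigma\to W^{2,p}$ bound. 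The one place where you diverge is the term $Z=(1+\al\Aep)^{-1}\H-\H$, and that is where your argument has a gap. You assert that the exponential decay of the fundamental solution of $1-\al\Delta$, together with a Fourier decomposition into modified Bessel functions $K_n$, forces $Z$ to decay exponentially. This is not a valid deduction for the system you wrote down: the homogeneous generalized Stokes resolvent problem $Z-\al\Delta Z=\nabla P$, $\dive Z=0$ in $\Piep$ admits decaying solutions that are \emph{not} exponentially decaying, namely $Z=\nabla P$ with $P$ harmonic (so $(1-\al\Delta)\nabla P=\nabla P$); in the azimuthal mode relevant here this includes $Z=c\,x^\perp/|x|^2$ with the multivalued pressure $P=c\,\theta$, which decays only like $1/|x|$ and in particular fails to be in $L^p$ for $p\le 2$. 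Ruling out this harmonic component requires an extra input — essentially that the resolvent $(1+\al\Aep)^{-1}$ is defined through the Leray projector, which forbids a multivalued pressure contribution — and your sketch does not supply it. The conclusion you want is true, but as written the decay step would not survive scrutiny.

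The paper avoids this issue entirely with a purely algebraic device: introducing the cutoff $\Hi=\eta\H$ (which agrees with $\H$ for $|x|\ge 2$ and vanishes near $\partial\Piep$), one has the exact identity $(1+\al\Aep)^{-1}(\Hi-\al\Delta\Hi)=\Hi$, whence $(1+\al\Aep)^{-1}\H-\H=(\Hi-\H)+(1+\al\Aep)^{-1}(\H-\Hi+\al\Delta\Hi)$. Here $\Hi-\H$ is smooth and supported in $\{\ep\le|x|\le 2\}$, and $\H-\Hi+\al\Delta\Hi$ is compactly supported and lies in $L^p_\sigma(\Piep)$ for every $1<p<\infty$, so both terms are in $W^{2,p}(\Piep)$ by Proposition \ref{Stokes} alone — no decay analysis, no Bessel functions, no discussion of the pressure. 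I would recommend replacing your treatment of $Z$ by this argument; alternatively, if you wish to keep the ODE route, you must explicitly verify (using the radial symmetry reduction from Section \ref{sect4} and the single-valuedness constraint on the pressure) that the coefficient of the harmonic solution $x^\perp/|x|^2$ vanishes, leaving only the $K_1(|x|/\sqrt{\al})$ contribution.
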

\begin{proof}
It follows from \cite[Theorem 4.1]{iftimie_two_2003} that the quantity $\K(q)+m\H$ is  bounded in $\Piep$, and the $L^\infty$-norm may be estimated by $C\|q\|_{L^1(\piep)\cap L^\infty(\piep)}$, where $C$ is a universal constant. Clearly
\begin{equation*}
\|\S(q)\|_{L^\infty}\leq\|\K(q)+m\H\|_{L^\infty}+\|\gamma\H\|_{L^\infty}\leq C\|q\|_{L^1(\Piep)\cap L^\infty(\Piep)} +\frac{|\ga|}{2\pi\ep}.
\end{equation*}
Since $\T(q)= (1+\al\Aep)^{-1}\S(q)$, relation \eqref{bounduinfty} follows from Proposition \ref{Stokes}.

\medskip

Assume now that $\supp q\subset \{|x|\leq R\}$. Then we know, from \cite[relation (2.8)]{iftimie_two_2003}, that $\K(q)$ is bounded by $C(\ep,R)/|x|^2$, so it belongs to $L^p(\Piep)$ for all $p>1$. Therefore $\S(q)-(\gamma+m)\H=\K(q)\in L^p(\Piep)$ for all $p>1$. Then Proposition \ref{Stokes} implies that
\begin{equation*}
\T(q)-(\gamma+m)(1+\al\Aep)^{-1}\H
=(1+\al\Aep)^{-1}(\S(q)-(\gamma+m)\H)\in W^{2,p}(\Piep)
\end{equation*}
for all $1<p<\infty$. Finally, we observe that $(1+\al\Aep)^{-1}(\Hi-\al\Delta \Hi)=\Hi$ and we write
\begin{align*}
\T(q)-(\gamma+m)\H
&=\T(q)-(\gamma+m)(1+\al\Aep)^{-1}\H+(\gamma+m)[(1+\al\Aep)^{-1}\H-\H]\\
&=\T(q)-(\gamma+m)(1+\al\Aep)^{-1}\H\\
&\hskip 1,5cm +(\gamma+m)[\Hi-\H+(1+\al\Aep)^{-1}(\H-\Hi+\al\Delta \Hi)]\\
&\in W^{2,p}(\Piep)
\end{align*}
for all $1<p<\infty$. We used above that $\Hi-\H\in W^{2,p}(\Piep)$, that $\H-\Hi+\al\Delta \Hi\in L^p_\sigma(\Piep)$ and Proposition \ref{Stokes}. This completes the proof.
\end{proof}

\section{Existence of the solution for fixed $\ep$}
\label{sect3}

In this section we prove part a) of Theorem \ref{mainthm}. A similar result, requiring more regularity and with a different proof can be found in \cite{YouZang21}.

We want to solve the following problem
\begin{equation}\label{eqtosolve}
\left\{
\begin{split}
\partial_t q+\T(q)\cdot\nabla q&=0, \qquad t>0,|x|>\ep\\
q\bigl|_{t=0}&=q_0, \qquad |x|>\ep
\end{split}
\right.
\end{equation}
where $q_0\in L^1(\Piep)\cap L^\infty(\Piep)$.

We construct a recursive sequence of approximate solutions in the following manner. We set $q^0(t,x)=q_0(x)$ and $u^0(t,x)=\T(q^0)(x)$. We define $q^1=q^1(t,x)$, $q^1\in L^\infty((0,+\infty);L^1\cap L^\infty (\Piep))$, to be the unique weak solution of the following transport equation
\begin{equation*}
\left\{
\begin{split}
\partial_t q^1+u^0\cdot\nabla q^1&=0, \qquad t>0,|x|>\ep\\
q^1\bigl|_{t=0}&=q_0, \qquad |x|>\ep.
\end{split}
\right.
\end{equation*}
The global existence of $q^1$ follows from Picard's theorem since, by Proposition \ref{h1boundT}, we have that $u^0$ is a Lipschitz vector field so its flow map is globally well-defined. Uniqueness can be established by energy estimates.

Next, given $q^n\in L^\infty((0,+\infty);L^1\cap L^\infty (\Piep))$, we define $q^{n+1}\in L^\infty((0,+\infty);L^1\cap L^\infty (\Piep))$ recursively as the unique global weak solution of the transport equation
\begin{equation}\label{eqqn}
\left\{
\begin{split}
\partial_t q^{n+1}+u^n\cdot\nabla q^{n+1}&=0, \qquad t>0,|x|>\ep\\
q^{n+1}\bigl|_{t=0}&=q_0, \qquad |x|>\ep,
\end{split}
\right.
\end{equation}
where $u^n=\T(q^n)$.
We will  pass to the limit in the above problem and find a solution of \eqref{eqtosolve}.

We first observe that, since $q^n$ satisfies a transport equation by a divergence-free vector field, the integral of $q^n$ is conserved:
\begin{equation*}
\int_{\Piep}q^n(t,x)\,dx=\int_{\Piep}q^n(0,x)\,dx=\int_{\Piep}q_0(x)\,dx=m.
\end{equation*}
We have that the circulation of $\K(q^n)$ on the boundary is equal to the quantity $\left( -\int_{\Piep}q^n(t,x)\,dx \right)$ (see  the proof of \cite[Lemma 3.1]{iftimie_two_2003}) and we recall that $H$ has unit circulation on the boundary. With this we infer that the circulation of $\S(q^n)$ on the boundary is $\gamma$.

Using, again, that $q^n$ satisfies a transport equation, we have that $q^n$ is bounded in $L^\infty(\R_+;L^1\cap L^\infty)$ uniformly in $n$. Proposition \ref{h1boundT} implies that $u^n$ is bounded in $L^\infty(\R_+;W^{1,\infty}(\Piep))$ uniformly in $n$. Indeed:
\begin{equation}\label{Lipb}
\begin{aligned}
\|u^n(t,\cdot)\|_{W^{1,\infty}(\Piep)}
&\leq C_1(\al,\ep)(\|q^n\|_{L^1(\Piep)\cap L^\infty(\Piep)} +|\gamma|)\\
&= C_1(\al,\ep)(\|q_0\|_{L^1(\Piep)\cap L^\infty(\Piep)} +|\gamma|)\\
&\equiv C_3.
\end{aligned}
\end{equation}
Since $q^{n+1}$ is transported by the flow of $u^n$, we deduce that $\supp q^{n+1}(t,\cdot)\subset\{|x|\leq R_0+C_3t\}$ where  $\supp q_0\subset\{|x|\leq R_0\}$. We use the second part of Proposition \ref{h1boundT} to obtain that, for all $1<p<\infty$, the quantity $u^n-(\gamma+m)\H$ is bounded in $L^\infty_{loc}([0,\infty);W^{2,p}(\Piep))$, uniformly in $n$. We will  prove that it actually converges in $H^2$.

Let us introduce the notation $v^{n+1}=(1-\al\Delta)u^{n+1}$ and
$$\vb^n=\Pep v^n=\S(q^n)=(1+\al\Aep)u^n.$$
We also define $\xi^n=\Delta_{\Piep}^{-1}q^n=\G(q^n)$, where $\G$ is the operator with kernel $G_\ep$, the Green function on $\Piep$. Let us observe that
\begin{equation*}
\vb^{n+1}-\vb^n=\S(q^{n+1})-\S(q^n)=\K(q^{n+1}-q^n)=\nabla^\perp(\xi^{n+1}-\xi^n).
\end{equation*}
In addition, since $q^n$ and $q^{n+1}$ are compactly supported and $\nabla^\perp G_\ep$, the kernel of $\K$, decays like $1/|x|^2$ at infinity (see \cite[relation (2.8)]{iftimie_two_2003}) we have that $\vb^{n+1}-\vb^n$ also decays like $1/|x|^2$ at infinity. In particular, it belongs to $L^2(\Piep)$.

We subtract \label{page1} the equation for $q^n$ from the equation for $q^{n+1}$ (see relation \eqref{eqqn}), we multiply it by $\xi^{n+1}-\xi^n$ and integrate in space (which is possible because $q^n$ and $q^{n+1}$ are compactly supported). We then obtain
\begin{align*}
\int_{\Piep}\partial_t(q^{n+1}-q^n)(\xi^{n+1}-\xi^n)+\int_{\Piep}(u^n\cdot\nabla q^{n+1}- u^{n-1}\cdot\nabla q^{n})(\xi^{n+1}-\xi^n)=0.
\end{align*}
The integrals above should be interpreted as duality pairings between $W^{-1,p}$ and $W^{1,p^\prime}$, where $p^\prime = p/(p-1)$ and $p \in (1,\infty)$.

Clearly
\begin{align*}
\int_{\Piep}\partial_t(q^{n+1}-q^n)(\xi^{n+1}-\xi^n)
&=\int_{\Piep}\Delta[\partial_t(\xi^{n+1}-\xi^n)](\xi^{n+1}-\xi^n)\\
&=-\int_{\Piep}[\partial_t\nabla(\xi^{n+1}-\xi^n)]\nabla(\xi^{n+1}-\xi^n)\\
&=-\frac12\frac{\d}{\d t}\|\nabla(\xi^{n+1}-\xi^n)\|_{L^2(\Piep)}^2\\
&=-\frac12\frac{\d}{\d t}\|\vb^{n+1}-\vb^n\|_{L^2(\Piep)}^2.
\end{align*}

Therefore
\begin{align*}
\frac12\frac{\d}{\d t}\|\vb^{n+1}&-\vb^n\|_{L^2(\Piep)}^2
= \int_{\Piep}(u^n\cdot\nabla q^{n+1}- u^{n-1}\cdot\nabla q^{n})(\xi^{n+1}-\xi^n)\\
&= \int_{\Piep}u^n\cdot\nabla (q^{n+1}- q^{n})(\xi^{n+1}-\xi^n) + \int_{\Piep}(u^n- u^{n-1})\cdot\nabla q^{n}(\xi^{n+1}-\xi^n)\\
&= \int_{\Piep}u^n\cdot(\vb^{n+1}- \vb^{n})^{\perp}(q^{n+1}-q^n) + \int_{\Piep}(u^n- u^{n-1})\cdot (\vb^{n+1}-\vb^n)^{\perp}q^n\\
&\equiv I_1+I_2,
\end{align*}
where we integrated by parts and used that $\nabla(\xi^{n+1}-\xi^n)=-(\vb^{n+1}- \vb^{n})^{\perp}$. We will now estimate these two terms.

Observe first that for a divergence free vector field $h$ we have the identity
\begin{equation*}
h^{\perp}\curl h=
\begin{pmatrix}
\partial_2\\\partial_1
\end{pmatrix}(h_1h_2)
+\begin{pmatrix}
-\partial_1\\\partial_2
\end{pmatrix}\frac{h_2^2-h_1^2}2.
\end{equation*}
Recalling  that $q^{n+1}-q^n=\curl(\vb^{n+1}- \vb^{n})$ and integrating by parts we find:
\begin{align*}
|I_1|
&= \left| \int_{\Piep}u^n\cdot(\vb^{n+1}- \vb^{n})^{\perp}(q^{n+1}-q^n) \right|\\
&= \left|\int_{\Piep}(\partial_2 u^n_1+\partial_1 u^n_2)(\vb_1^{n+1}- \vb_1^{n})(\vb_2^{n+1}- \vb_2^{n}) \right.\\
&\left.\hskip3cm + \frac12\int_{\Piep}(-\partial_1 u^n_1+\partial_2 u^n_2)\bigl[(\vb_2^{n+1}- \vb_2^{n})^2-(\vb_1^{n+1}- \vb_1^{n})^2\bigr]\right|\\
&\leq C\|\nabla u^n\|_{L^\infty}\|\vb^{n+1}- \vb^{n}\|_{L^2}^2\\
&\leq C\|\vb^{n+1}- \vb^{n}\|_{L^2}^2,
\end{align*}
where we used \eqref{Lipb}. We estimate now $I_2$:
\begin{align*}
|I_2|&\leq \|u^n-u^{n-1}\|_{L^2}\|\vb^{n+1}-\vb^n\|_{L^2}\|q^n\|_{L^\infty}\\
&\leq C\|(1+\al\Aep)^{-1}(\vb^n-\vb^{n-1})\|_{L^2}\|\vb^{n+1}-\vb^n\|_{L^2}\\
&\leq C\|\vb^n-\vb^{n-1}\|_{L^2}\|\vb^{n+1}-\vb^n\|_{L^2}\\
&\leq C\|\vb^n-\vb^{n-1}\|_{L^2}^2+C\|\vb^{n+1}-\vb^n\|_{L^2}^2.
\end{align*}

We conclude that\label{page2}
\begin{equation*}
\frac{\d}{\d t}\|\vb^{n+1}-\vb^n\|_{L^2(\Piep)}^2
\leq C_4\|\vb^n-\vb^{n-1}\|_{L^2}^2+C_4\|\vb^{n+1}-\vb^n\|_{L^2}^2,
\end{equation*}
for some constant $C_4$ independent of $n$ and $t$. Given that $\vb^{n+1}-\vb^n$ vanishes at the initial time, the Gronwall lemma implies the following bound:
\begin{equation*}
\sup_{[0,T]}\|\vb^{n+1}-\vb^n\|_{L^2(\Piep)}\leq \sup_{[0,T]}\|\vb^{n}-\vb^{n-1}\|_{L^2(\Piep)}\sqrt{e^{C_4T}-1}.
\end{equation*}

We choose the time $T_0$ such that $\sqrt{e^{C_4T_0}-1}=\frac12$. Then, using the estimate above recursively we find, by induction, that
\begin{equation*}
\sup_{[0,T_0]}\|\vb^{n+1}-\vb^n\|_{L^2(\Piep)}\leq 2^{-n}\sup_{[0,T_0]}\|\vb^{1}-\vb^{0}\|_{L^2(\Piep)}.
\end{equation*}
It then follows, from Proposition \ref{Stokes}, that
\begin{equation*}
\sup_{[0,T_0]}\|u^{n+1}-u^n\|_{H^2(\Piep)}\leq C 2^{-n}.
\end{equation*}
This means that the sequence $u^n-(\ga+m)\H$ is a Cauchy sequence in the space  $C^0([0,T_0];H^2(\Piep))$ and, therefore, that it is convergent. Hence there exists some $u$ such that $u-(\ga+m)\H\in C^0([0,T_0];H^2(\Piep))$ and
\begin{equation}\label{weakun}
u^n-u\to0\quad\text{strongly in }C^0([0,T_0];H^2(\Piep)).
\end{equation}
Denoting $q=\curl(u-\al\Delta u)$ we further obtain that
\begin{equation*}
q^n-q\to0\quad\text{strongly in }C^0([0,T_0];H^{-1}(\Piep)).
\end{equation*}
In particular $q(0,x)=q_0(x)$.

The sequence $q^n$ being bounded in $L^\infty(\R_+;L^1(\Piep)\cap L^\infty(\Piep))$ implies that $q\in L^\infty(\R_+;L^1(\Piep)\cap L^\infty(\Piep))$. Moreover, there exists a subsequence $q^{n_k}$ such that
\begin{equation}\label{weakqn}
q^{n_k}\rightharpoonup q\quad\text{weak}*\text{ in }L^\infty(\R_+;L^1\cap L^\infty).
\end{equation}

To complete the proof of the existence part of Theorem \ref{mainthm}, it remains to prove that $u=\T(q)$. We know that $u-(\gamma+m)\H\in L^\infty([0,T_0];H^2(\Piep))$. From Proposition \ref{h1boundT} we also know that $\T(q)-(\gamma+m)\H\in L^\infty([0,T_0];H^2(\Piep))$ so we must have that $u-\T(q)\in L^\infty([0,T_0];H^2(\Piep))$. Moreover, $u-\T(q)$ is divergence free and vanishes at the boundary.

Let $\varphi\in C^\infty_{c,\sigma}([0,T_0]\times\Piep)$. There exists  $\psi\in C^\infty_{c}([0,T_0]\times\overline{\Pi}_\ep)$ such that $\varphi=\nabla^\perp\psi$. Then $\nabla \psi=0$ in a neighborhood of the boundary of $\Piep$, so $\psi$ must be constant in the same neighborhood. We denote by $\overline\psi(t)$ this constant, so that $\psi(t,x) \equiv \overline\psi(t)$ in a neighborhood of  the boundary of $\Piep$.

By self-adjointness of the Stokes operator we have that
\begin{equation}\label{star}
\begin{split}
\int_0^{T_0}\int_{\Piep}u^{n_k}\cdot (1+&\al\Aep)\varphi
=\int_0^{T_0}\int_{\Piep}(1+\al\Aep)u^{n_k}\cdot \varphi\\
&=\int_0^{T_0}\int_{\Piep}\S(q^{n_k})\cdot \varphi\\
&=\int_0^{T_0}\int_{\Piep}\S(q^{n_k})\cdot \nabla^\perp\psi\\
&=-\int_0^{T_0}\int_{\Piep}\curl \S(q^{n_k})\ \psi-\int_0^{T_0}\int_{\partial\Piep}\frac{x^\perp}{|x|}\cdot\S(q^{n_k})\ \psi\\
&=-\int_0^{T_0}\int_{\Piep}q^{n_k} \psi-\gamma\int_0^{T_0} \overline\psi(t)\,dt,
\end{split}
\end{equation}
where we used that the circulation of $\S(q^{n_k})$ on the boundary is $\gamma$.

Since $\Delta \varphi$ is divergence free and tangent to the boundary (actually compactly supported), we have that $\Pep \Delta \varphi=\Delta \varphi$. So $(1+\al\Aep)\varphi=\varphi-\al\Delta\varphi\in C^\infty_{c,\sigma}([0,T_0]\times\Piep)$. The convergence properties expressed in relations \eqref{weakun} and \eqref{weakqn} allow to pass to the limit $k\to\infty$ in \eqref{star} to obtain that
\begin{equation*}
\int_0^{T_0}\int_{\Piep}u\cdot (1+\al\Aep)\varphi
=-\int_0^{T_0}\int_{\Piep}q \psi-\gamma\int_0^{T_0} \overline\psi(t)\,dt.
\end{equation*}
But the same calculations as in \eqref{star} show that
\begin{equation*}
\int_0^{T_0}\int_{\Piep}\T(q)\cdot (1+\al\Aep)\varphi
=-\int_0^{T_0}\int_{\Piep}q \psi-\gamma\int_0^{T_0} \overline\psi(t)\,dt.
\end{equation*}
We deduce that
\begin{equation*}
\int_0^{T_0}\int_{\Piep}u\cdot (1+\al\Aep)\varphi
=\int_0^{T_0}\int_{\Piep}\T(q)\cdot (1+\al\Aep)\varphi
\end{equation*}
which means that
\begin{equation*}
\int_0^{T_0}\int_{\Piep}(1+\al\Aep)u\cdot \varphi
=\int_0^{T_0}\int_{\Piep}(1+\al\Aep)\T(q)\cdot \varphi.
\end{equation*}
This implies that $(1+\al\Aep)(u-\T(q))=0$, so necessarily $u=\T(q)$.

We proved that there exists a solution of \eqref{pvorVareWOBdryCond} on the time interval $[0,T_0]$. Repeating the argument starting from $T_0$ we can extend this solution up to time $2T_0$. Continuing like this we  construct a global solution. Its uniqueness is classical. It can be proved by estimating the $H^2$ norm of the difference of two solutions, with exactly the same argument as in the estimate of $\frac{\d}{\d t}\|\vb^{n+1}-\vb^n\|_{L^2(\Piep)}^2$ so we omit it. Part a) of Theorem \ref{mainthm} is now proved.

\section{$H^1$ estimates for $\uep$}
\label{sect4}

The aim of this section is to derive estimates for $\uep$ which are independent of $\ep$.

We start by noting that, since $\qep$ satisfies a transport equation by a divergence-free vector field, we have
\begin{align*}
\|\qep\|_{L^\infty(\R_+;L^1(\piep)\cap L^\infty(\piep))}
&\leq \|q_{\ep,0}\|_{L^\infty(\R_+;L^1(\piep)\cap L^\infty(\piep))}\\
&= \|q_{0}\|_{L^\infty(\R_+;L^1(\piep)\cap L^\infty(\piep))},
\end{align*}
which is bounded independently of $\ep$.

Let us now make the following observation. If $f$ is a scalar radial function, decaying sufficiently fast at infinity, then $(1+\alpha \A_\ep )^{-1}(x^\perp f)$ is of the form $x^\perp g$, where $g$ is a scalar radial function. Indeed, let $h=(1 - \alpha \Deltaep)^{-1}(x^\perp f)$. Then, for any special orthogonal matrix $M$ (rotation matrix), the vector field $P(x)=x^\perp f$ is invariant under the transformation $P(x)\to M^t P(Mx)$. The rotational invariance of the Laplacian allows to write the following sequence of computations:
\begin{align*}
(1-\al\Delta)[M^t h(Mx)]
&=M^t(1-\al\Delta)[h(Mx)]\\
&=M^t[(1-\al\Delta)h](Mx)\\
&=M^tP(Mx)\\
&=P(x)\\
&=(1-\al\Delta)h(x).
\end{align*}
We infer that $ M^t h(Mx)=h(x)$, so $h(Mx)=Mh(x)$ for every special orthogonal matrix $M$. This means that $h$ must be of the form $h=x^\perp g_1+xg_2$ with $g_1,g_2$ scalar radial functions. But one can check that $(1 - \alpha \Delta)(xg_2)$ is proportional to $x$ and $(1 - \alpha \Delta)(x^\perp g_1)$ is proportional to $x^\perp$. Since the sum must be $x^\perp f$ we infer that $xg_2=0$, so $h=x^\perp g_1$. Recalling that $g_1$ is radial, we observe that $\dive h=\dive(x^\perp g_1)=0$. We conclude that $h-\al\Delta h=x^\perp f$, that $h$ vanishes at $|x|=\ep$ and at infinity and that $h$ is divergence free. Therefore $h=(1+\alpha \A_\ep )^{-1}(x^\perp f)$. We proved, in this paragraph, that if $f$ is a scalar radial function then $(1+\alpha \A_\ep )^{-1}(x^\perp f)=(1-\alpha \Deltaep )^{-1}(x^\perp f)$ is of the form $x^\perp$ times a scalar radial function.

We will now proceed to obtain the required {\em a priori} estimates for $\uep$.
%We will begin with spatial estimates, for which time $t$ is merely a parameter.
% Recall that we introduced
% \begin{equation*}%\label{Wvare}
% \S(\qep) \equiv \K(q_\ep)+(\ga+m) \Hep
% \end{equation*}
% With this notation we have the following lemma.
We start with the following lemma.
\begin{lemma} \label{WvareAsymp}
We have that $\K(q_\ep) + m \Hep$ is bounded in $L^\infty(\R_+\times\piep)$ and  $\K(q_\ep) + m (\Hep-\Hi)$ is bounded in $L^\infty_{loc}([0,\infty);L^2(\piep))$, independently of $\ep$.
% There exists $\wep  \in L^2(\Pi_\ep)$ such that
% \begin{equation}\label{Wep}
% \S(\qep) = \wep  + \ga \Hep +m\Hi .
% \end{equation}
% Moreover, there exists $C=C(\|q_\ep\|_{L^1},\|q_\ep\|_{L^\infty})>0$ such that
% \[\|\wep \|_{L^2(\Pi_\ep)}\leq C,\] for all $\ep>0$.
\end{lemma}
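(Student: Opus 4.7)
The plan is to split the lemma into its two claims and handle each by combining the $L^\infty$ bound $\|\K(\qep)+m\H\|_{L^\infty(\piep)}\le C\|\qep\|_{L^1\cap L^\infty}$ already invoked from \cite[Theorem 4.1]{iftimie_two_2003} in the proof of Proposition \ref{h1boundT}, with a $1/|x|^2$ decay of $\K(\qep)$ at infinity, uniform in $\ep$, coming from the method of images for the exterior Dirichlet Green function $G_\ep$.

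For the first claim, I would simply observe that $\qep$ is transported by a divergence-free vector field that vanishes on the boundary of $\piep$, so $\|\qep(t,\cdot)\|_{L^1(\piep)\cap L^\infty(\piep)}=\|q_0\|_{L^1(\piep)\cap L^\infty(\piep)}$ for every $t\ge 0$. Plugging this into the cited $L^\infty$ bound with $q=\qep(t,\cdot)$ gives the required $L^\infty(\R_+\times\piep)$ estimate uniform in $\ep$.

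For the second claim, the key observation is that $\H-\Hi=(1-\eta)\H$ is compactly supported in $\{|x|\le 2\}$, so on any far field $\{|x|\ge M\}$ with $M\ge 2$ the quantity $\K(\qep)+m(\H-\Hi)$ reduces to $\K(\qep)$. I would therefore split $\piep=A_1\cup A_2$ with $A_1=\{\ep\le|x|\le M\}$ and $A_2=\{|x|>M\}$, choosing $M\ge 2$ and at least twice the radius $R_T$ of a ball containing $\supp\qep(t,\cdot)$ for $t\in[0,T]$. On $A_1$ I would write
\[
\K(\qep)+m(\H-\Hi)=[\K(\qep)+m\H]-m\eta\H,
\]
where the bracketed term is uniformly $L^\infty$-bounded by the first part and $\eta\H$ is a bounded smooth vector field on $\R^2$; integrating over $A_1$, whose area is at most $\pi M^2$, gives the desired $L^2$ estimate on $A_1$. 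On $A_2$, where $\H-\Hi$ vanishes, I would use the method-of-images formula
\[
\nabla_x^\perp G_\ep(x,y)=\frac{1}{2\pi}\left[\frac{(x-y)^\perp}{|x-y|^2}-\frac{(x-y^*)^\perp}{|x-y^*|^2}\right],\qquad y^*=\frac{\ep^2}{|y|^2}y,
\]
together with the fact that $|y^*|\le\ep$ for every $y\in\piep$, and a Taylor expansion for $|x|\gg|y|+|y^*|$, to obtain $|\nabla_x^\perp G_\ep(x,y)|\le C|y-y^*|/|x|^2$; this yields $|\K(\qep)(t,x)|\le C_T\|q_0\|_{L^1}/|x|^2$ on $A_2$, which is square-integrable there with norm depending only on $T$ and $q_0$.

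The step I expect to be the main obstacle is producing the decay constant uniformly in $\ep$ and locally uniformly in $t$. This requires a bound $R_T$, independent of $\ep$, on the support radius of $\qep(t,\cdot)$ on $[0,T]$, which in turn rests on a uniform-in-$\ep$ Lipschitz (or at least $L^\infty$) bound on $\uep$; this is precisely the kind of a priori estimate being developed in the current section and, once available, the Taylor expansion needed for the decay is routine because the image point $y^*$ collapses into the disk of radius $\ep$ as $\ep\to 0$, so the cancellation between the direct and image contributions is in fact enhanced rather than degraded by the small parameter.
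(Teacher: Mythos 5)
Your treatment of the first claim and of the near-field/far-field splitting for the second claim matches the paper's: conservation of $\|\qep\|_{L^1\cap L^\infty}$ plus \cite[Theorem 4.1]{iftimie_two_2003} for the $L^\infty$ bound, and the method-of-images kernel with the $|y-y^*|/|x|^2$ decay for $|x|\gg|y|$ in the far field (the paper gets this from the exact identity $\bigl|\tfrac{a}{|a|^2}-\tfrac{b}{|b|^2}\bigr|=\tfrac{|a-b|}{|a||b|}$ rather than a Taylor expansion, but that is cosmetic). The genuine gap is exactly at the step you flag as ``the main obstacle'' and then do not close: the uniform-in-$\ep$ bound $R_T$ on the support of $\qep(t,\cdot)$. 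You propose to obtain it from ``a uniform-in-$\ep$ Lipschitz (or at least $L^\infty$) bound on $\uep$,'' but no such bound on the \emph{full} velocity is available: $\uep$ contains the term $\ga(1+\al\Aep)^{-1}\H$, and since $\|\H\|_{L^\infty(\Piep)}=1/(2\pi\ep)$, the $\ep$-independent resolvent bound \eqref{linfb} only gives $O(1/\ep)$ for this piece. Deferring to ``estimates being developed in the current section'' is also circular, because the $H^1$ estimates of that section (Lemma \ref{H1part}) themselves rely on the present lemma.

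The paper closes this gap with an argument you are missing: decompose $\uep=\uep^1+\uep^2$ with $\uep^1=(1+\al\Aep)^{-1}[\K(\qep)+m\H]$ and $\uep^2=\ga(1+\al\Aep)^{-1}\H$. The first piece is bounded in $L^\infty$ uniformly in $\ep$ by combining the first claim of the lemma with \eqref{linfb}. The second piece need not be uniformly bounded, but since $\H$ is $x^\perp$ times a radial function, so is $(1+\al\Aep)^{-1}\H$ (this is the symmetry observation established earlier in Section \ref{sect4}); hence the trajectories of $\uep^2$ are circles centered at the origin and contribute nothing to the radial growth of $\supp\qep$. This yields $\supp\qep(t,\cdot)\subset\{|x|\le R_0+Mt\}$ with $M$ independent of $\ep$, which is what your far-field estimate needs. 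Without this (or some substitute), your proof of the second claim is incomplete.
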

\begin{proof}
We recall the result in \cite[Theorem 4.1]{iftimie_two_2003}, which reads, in our notation:
\[\|\K(q_\ep) + m \Hep \|_{L^\infty(\Pi_\ep)}\leq C\|q_\ep\|_{L^\infty}^{1/2}\|q_\ep\|_{L^1}^{1/2}\leq C\|q_0\|_{L^\infty}^{1/2}\|q_0\|_{L^1}^{1/2},\]
where $C>0$ is independent of $\ep$. This shows the first bound.

Next, we need to obtain bounds on the support of $\qep$ which are independent of $\ep$. We write, thanks to the modified Biot-Savart law \eqref{mBS},
\begin{equation*}
\uep=(1+\al\Aep)^{-1}[\K(q_\ep) + m \Hep]+\ga(1+\al\Aep)^{-1}\H\equiv \uep^1+\uep^2.
\end{equation*}

We bound $\uep^1$ by using relation \eqref{linfb}:
\begin{align*}
\|\uep^1\|_{L^\infty(\piep)}
&=\|(1+\al\Aep)^{-1}[\K(q_\ep) + m \Hep]\|_{L^\infty(\piep)}\\
&\leq C_0\|\K(q_\ep) + m \Hep\|_{L^\infty(\piep)}\\
&\leq C_0C\|q_0\|_{L^\infty}^{1/2}\|q_0\|_{L^1}^{1/2}\\
&\equiv M.
\end{align*}

We observe now that since $H$ is of the form $x^\perp$ times a radial function, then $\uep^2=(1+\al\Aep)^{-1}\H$ is of the same form. So the trajectories of $\uep^2$ are circles centered in the origin. Recall that $\qep$ is transported by the velocity field $\uep=\uep^1+\uep^2$. If we want to estimate how far from the origin the support of $\qep$ can go, we can ignore the term $\uep^2$. Since we bounded $\uep^1$ by $M$, we infer that
\begin{equation}\label{suppq}
\supp q_\ep(t,\cdot)\subset \{|x|\leq R_0+Mt\}
\end{equation}
where $R_0$ is such that $\supp q_0\subset \{|x|\leq R_0\}$.

Next we remark that $ \K(q_\ep) + m(\Hep-\Hi)=(\K(q_\ep) + m\Hep)-m\Hi$ is a sum of uniformly bounded vector fields, hence bounded in $L^2_{\loc} (\Pi_\ep)$ independently of $\ep$. Furthermore, using the expressions for the kernel $K_\ep(x,y)\equiv\nabla_x^\perp G_\ep(x,y)$ of the Biot-Savart law $\K$ given in  \cite[(2.5)]{iftimie_two_2003} with the conformal mapping given by $T(x)=x/\ep$ implies the following formula
\begin{equation*}
K_\ep(x,y)=\frac{(x-y)^\perp}{2\pi|x-y|^2}
-\frac{(x-\ep^2y/|y|^2)^\perp}{2\pi|x-\ep^2y/|y|^2|^2}\cdot
\end{equation*}
Using the relation $\bigl|\frac{a}{|a|^2}-\frac{b}{|b|^2}\bigr|=\frac{|a-b|}{|a||b|}$ and observing that $|\ep^2y/|y|^2|\leq|y|$ we get for $|x|>2|y|$ the following upper bound for the kernel $K_\ep$:
\begin{equation*}
|K_\ep(x,y)|=\frac{|y-\ep^2y/|y|^2|}{2\pi|x-y||x-\ep^2y/|y|^2|}
\leq \frac{|y|+|\ep^2y/|y|^2|}{2\pi(|x|-|y|)(|x|-|\ep^2y/|y|^2|)}\leq \frac{4|y|}{\pi|x|^2}.
\end{equation*}
Recalling the bound on the support of $\qep$ given in \eqref{suppq}, we can now estimate for any $|x|>2(R_0+Mt)$:
\begin{align*}
|\K(\qep)(x)|
&\leq\int_{\piep}|K_\ep(x,y)|\, |\qep(y)|\,dy\\
&\leq\int_{\piep}\frac{4|y|}{\pi|x|^2}\, |\qep(y)|\,dy\\
&\leq \frac{4(R_0+Mt)\|\qep\|_{L^1(\piep)}}{\pi|x|^2}.
\end{align*}
Observing that $\H-\Hi$ vanishes for $|x|>2$  establishes the second bound in $L^2(\Pi_\ep)$ as desired.
\end{proof}

% Next recall that, in the Euler-$\alpha$ equations \eqref{EulerAlphaVelVare}, we have
% \begin{equation}\label{vrelatu}
% \vep = (1 - \alpha \Delta) \uep,
% \end{equation}
% and, since $\uep$ is divergence-free and vanishes at $\partial\Pi_\ep$,
% \begin{equation} \label{PuuDirichlet}
% \Pep\uep = \uep, \quad \uep\big|_{\partial\Pi_\ep}=0.
% \end{equation}
% Let $\A_\ep=-\Pep\Delta$ denote the {\em Stokes operator} in $\Pi_\ep$ and set $(1+\alpha\A_\ep)^{-1}$ to be the inverse of the operator $1+\alpha\A_\ep$ with homogeneous Dirichlet boundary conditions at $\{|x|=\ep\}$. Then, after applying $\P$ to both sides of \eqref{vrelatu} and using \eqref{PuuDirichlet}, we may write $\uep$ in terms of $\vep$ as
% \begin{equation}\label{uvarevvare}
%   \uep =  (1+\alpha \A_\ep )^{-1} \Pep \vep.
% \end{equation}

%Bearing in mind the expression \eqref{Pvvare} for $\Pep\vep$ together with the relation \eqref{uvarevvare} and the decomposition \eqref{Wep} we can write $\uep$ under the form

Recalling the modified Biot-Savart law \eqref{mBS}
% and the decomposition \eqref{Wep}
we can rewrite $\uep$ in the following way:
\begin{equation}\label{decuep}
\uep=(1+\alpha \A_\ep )^{-1} [\K(q_\ep) + m (\Hep-\Hi)]
+\ga(1+\alpha \A_\ep )^{-1}\Hep+m (1+\alpha \A_\ep )^{-1}\Hi.
\end{equation}

We will now proceed to derive $H^1$ estimates for each of these terms.

We start with the first one.
\begin{lemma} \label{H1part}
We have that  $(1+\alpha \A_\ep )^{-1}(\K(q_\ep) + m (\Hep-\Hi))$ is bounded in the space $L^\infty_{loc}([0,\infty);H^1(\piep))$ independently of $\ep$.
\end{lemma}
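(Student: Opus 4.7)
The plan is to reduce the claim to a standard energy estimate for the Stokes system on $\piep$. Denote
\begin{equation*}
f_\ep \equiv \K(\qep) + m(\H - \Hi) \quad\text{and}\quad w_\ep \equiv (1+\al\Aep)^{-1} f_\ep.
\end{equation*}
Lemma~\ref{WvareAsymp} already tells us that $f_\ep$ is bounded in $L^\infty_{loc}([0,\infty);L^2(\piep))$ uniformly in $\ep$, so everything reduces to showing that $\|w_\ep\|_{H^1(\piep)} \leq C \|f_\ep\|_{L^2(\piep)}$ with $C$ independent of $\ep$.

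First I would verify that $f_\ep$ actually lives in $L^2_\sigma(\piep)$, so that the Stokes machinery applies cleanly. The field $\K(\qep)$ is by construction divergence-free and tangent to $\{|x|=\ep\}$, being $\nabla^\perp$ of a stream function with zero Dirichlet data. The vector fields $\H = x^\perp/(2\pi|x|^2)$ and $\Hi = \eta(x)\H$ are both divergence-free (for $\Hi$ this uses the radial symmetry of $\eta$) and pointwise orthogonal to $x/|x|$, hence tangent to $\partial\piep$. Thus $f_\ep \in L^2_\sigma(\piep)$.

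With this in hand, Proposition~\ref{Stokes} gives $w_\ep \in W^{2,2}(\piep)\cap H^1_0(\piep)\cap L^2_\sigma(\piep)$ with $w_\ep + \al\Aep w_\ep = f_\ep$. Testing this identity against $w_\ep$ in $L^2_\sigma$ and integrating by parts, using that $w_\ep$ is divergence free and vanishes on $\partial\piep$ so that $\langle \Aep w_\ep, w_\ep\rangle = \|\nabla w_\ep\|_{L^2(\piep)}^2$, yields
\begin{equation*}
\|w_\ep\|_{L^2(\piep)}^2 + \al \|\nabla w_\ep\|_{L^2(\piep)}^2 = \int_{\piep} f_\ep \cdot w_\ep \leq \|f_\ep\|_{L^2(\piep)}\|w_\ep\|_{L^2(\piep)}.
\end{equation*}
A single application of Young's inequality then bounds $\|w_\ep\|_{L^2(\piep)}^2 + \al\|\nabla w_\ep\|_{L^2(\piep)}^2$ by $\|f_\ep\|_{L^2(\piep)}^2$, and composing with Lemma~\ref{WvareAsymp}(b) produces the required $\ep$-uniform $L^\infty_{loc}(\R_+;H^1(\piep))$ bound.

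There is essentially no deep obstacle here; the combinatorial work sits in Lemma~\ref{WvareAsymp}. The structural point worth emphasising is \emph{why} one subtracts $m\Hi$ and not $m\H$ before inverting $1+\al\Aep$: the bare combination $\K(\qep)+m\H$ is only $L^\infty$ because $\H$ has a non-$L^2$ tail at infinity, and the corrector $m\Hi$ kills precisely that tail while keeping the resulting field in $L^2_\sigma(\piep)$. This is what turns the estimate into a clean Stokes energy argument, and it is why the three-term decomposition \eqref{decuep} of $\uep$ is set up the way it is.
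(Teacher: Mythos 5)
Your proof is correct and follows essentially the same route as the paper: the same energy estimate, testing $(1+\al\Aep)w_\ep=f_\ep$ against $w_\ep$ and integrating by parts to get $\|w_\ep\|_{L^2}^2+\al\|\nabla w_\ep\|_{L^2}^2\leq\|f_\ep\|_{L^2}\|w_\ep\|_{L^2}$, then invoking Lemma~\ref{WvareAsymp} for the uniform $L^2$ bound on $f_\ep$. The only differences are cosmetic (you use Cauchy--Schwarz plus Young where the paper absorbs the right-hand side directly, and you spell out the membership $f_\ep\in L^2_\sigma(\piep)$, which the paper leaves implicit).
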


\begin{proof}
Let us denote $b_\ep=\K(q_\ep) + m (\Hep-\Hi)$ and $a_\ep=(1+\alpha \A_\ep )^{-1}b_\ep$. Then $(1+\alpha \A_\ep)a_\ep=b_\ep$. We do a classical energy estimate in which we multiply this relation by $a_\ep$. We get
\begin{equation*}
\int_{\piep}a_\ep\cdot b_\ep=\int_{\piep}a_\ep\cdot(1+\alpha \A_\ep)a_\ep=\int_{\piep}(|a_\ep|^2-\al a_\ep\cdot\Delta a_\ep)=\int_{\piep}(|a_\ep|^2+\al |\nabla a_\ep|^2).
\end{equation*}
But
\begin{equation*}
\int_{\piep}a_\ep\cdot b_\ep\leq \|a_\ep\|_{L^2(\piep)}^{\frac12}\|b_\ep\|_{L^2(\piep)}^{\frac12}
\leq \Bigl(\int_{\piep}(|a_\ep|^2+\al |\nabla a_\ep|^2)\Bigr)^{\frac12}\|b_\ep\|_{L^2(\piep)}^{\frac12}
\end{equation*}
so
\begin{equation*}
\int_{\piep}(|a_\ep|^2+\al |\nabla a_\ep|^2)\leq \int_{\piep}|b_\ep|^2.
\end{equation*}
From the previous lemma we know that $b_\ep$ is bounded in $L^2$ independently of $\ep$, so the conclusion follows.
\end{proof}

Next we will discuss the third term in \eqref{decuep}. To this end we perform an energy estimate for the term  $(1+\alpha \A_\ep )^{-1} \Hi$.
\begin{lemma}\label{Hipart}
There exists a constant $C=C(\al)$ which depends only on $\al$ such that
\begin{equation*}
\|(1+\alpha \A_\ep )^{-1} \Hi-\Hi\|_{H^1(\Pi_\ep)}\leq C(\al).
\end{equation*}
\end{lemma}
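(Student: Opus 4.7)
The plan is to set $\phi := (1+\al\Aep)^{-1}\Hi - \Hi$ and directly derive, then solve, the elliptic equation satisfied by $\phi$. Since $\Hi = \eta(|x|)\,x^\perp/(2\pi|x|^2)$ is of the form $x^\perp f(|x|)$, the radial-type observation preceding the lemma shows that $w := (1+\al\Aep)^{-1}\Hi$ is again of this form, and that on such divergence-free vector fields the Stokes operator collapses to the scalar Dirichlet Laplacian; in particular $(1-\al\Delta)w = \Hi$ in $\Piep$, with $w|_{|x|=\ep}=0$ and $w\to 0$ at infinity. Since $\ep$ is small (so $\ep<1$) and $\eta\equiv 0$ on $|x|\leq 1$, $\Hi$ also vanishes on $|x|=\ep$ and is smooth on $\Piep$, so subtraction gives the Dirichlet problem
\begin{equation*}
\phi-\al\Delta\phi=\al\Delta\Hi\quad\text{in }\Piep,\qquad \phi|_{|x|=\ep}=0,\qquad \phi\to 0\ \text{at infinity.}
\end{equation*}

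Next I would produce an $\ep$-independent bound on the right-hand side. Since $H=\nabla^\perp\bigl(\log|x|/(2\pi)\bigr)$ is harmonic on $\R^2\setminus\{0\}$, we have $\Delta H\equiv 0$ on $\Piep$, so the product rule yields $\Delta\Hi = 2(\nabla\eta\cdot\nabla)H + (\Delta\eta)H$, supported in the fixed annulus $\{1\leq|x|\leq 2\}$ with smooth bounded coefficients that do not depend on $\ep$; in particular $\|\Delta\Hi\|_{L^2(\Piep)}\leq C$. Moreover $\Delta\Hi$ is again $x^\perp$ times a radial function, hence divergence-free, so $\Delta\Hi\in L^2_\sigma(\Piep)$.

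To conclude, I would apply Proposition \ref{Stokes} with $p=2$ to $\al\,\Delta\Hi\in L^2_\sigma(\Piep)$, obtaining $\tilde\phi := \al(1+\al\Aep)^{-1}\Delta\Hi\in H^2(\Piep)\cap H^1_0(\Piep)$ with $\|\tilde\phi\|_{H^2}\leq C\al\|\Delta\Hi\|_{L^2}\leq C(\al)$. Because $\tilde\phi$ is itself of radial type, the reduction above turns $(1+\al\Aep)\tilde\phi=\al\Delta\Hi$ into the scalar equation $\tilde\phi-\al\Delta\tilde\phi=\al\Delta\Hi$ with the same boundary and decay conditions as $\phi$; a one-line energy estimate on $\phi-\tilde\phi$ then forces $\phi=\tilde\phi$, giving $\|\phi\|_{H^1(\Piep)}\leq\|\phi\|_{H^2(\Piep)}\leq C(\al)$.

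The chief subtlety is that $\Hi\notin L^2(\Piep)$ (it decays only like $1/|x|$), which prevents one from applying $\Aep$ directly to $\Hi$ or from writing $\phi$ as a one-step image under $(1+\al\Aep)^{-1}$ of something manifestly in $L^2_\sigma$. The radial-type structure bypasses this: the Stokes problem for $w$ collapses to a pointwise scalar Dirichlet equation in which $\Hi$ is merely data, so the subtraction step is purely pointwise and no integrability of $\Hi$ itself is required.
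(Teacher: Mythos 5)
Your setup coincides with the paper's: both use the radial-type observation to reduce $(1+\al\Aep)^{-1}\Hi$ to the scalar Dirichlet problem $(1-\al\Delta)w=\Hi$, both subtract $\Hi$ (using that it vanishes on $|x|=\ep$ because $\ep<1$) to get $\phi-\al\Delta\phi=\al\Delta\Hi$ with $\phi|_{|x|=\ep}=0$, and both compute $\Delta\Hi=\Delta\eta\,H+2\nabla\eta\cdot\nabla H$, a fixed smooth field supported in $\{1\leq|x|\leq2\}$, so that $\|\Delta\Hi\|_{L^2(\Piep)}$ is independent of $\ep$. Up to this point the argument is correct.

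The gap is in the final quantitative step. You bound $\tilde\phi=\al(1+\al\Aep)^{-1}\Delta\Hi$ by invoking Proposition \ref{Stokes} with $p=2$, writing $\|\tilde\phi\|_{H^2}\leq C\al\|\Delta\Hi\|_{L^2}\leq C(\al)$. But Proposition \ref{Stokes} only asserts boundedness from $L^p_\sigma$ to $W^{2,p}$ for each fixed domain; it does not claim that the operator norm is uniform in $\ep$, and the scaling argument that makes the $L^\infty$ constant $C_0$ universal does not transfer to $W^{2,p}$, because Sobolev norms involving derivatives are not scale-invariant (compare Proposition \ref{h1boundT}, whose constants are explicitly allowed to depend on $\ep$). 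Since the whole point of the lemma is an $\ep$-independent bound, this step as written only yields $C(\al,\ep)$. The repair is exactly what the paper does instead: multiply $\phi-\al\Delta\phi=\al\Delta\Hi$ by $\phi$ and integrate over $\Piep$ (the boundary term vanishes since $\phi|_{|x|=\ep}=0$), giving
\begin{equation*}
\|\phi\|_{L^2(\Piep)}^2+\al\|\nabla\phi\|_{L^2(\Piep)}^2\leq\al^2\|\Delta\Hi\|_{L^2(\Piep)}^2,
\end{equation*}
which is the desired $H^1$ bound with a constant depending only on $\al$. You already gesture at this energy estimate to identify $\phi$ with $\tilde\phi$; it should carry the quantitative burden as well, after which the detour through Proposition \ref{Stokes} is unnecessary (it would only be needed if you wanted the stronger, but not required, $H^2$ control).
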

\begin{proof}
Since $\Hi$ is of the form $x^\perp$ times a radial function, we have that $w_1\equiv (1+\alpha \A_\ep )^{-1} \Hi=(1-\alpha \Deltaep )^{-1}\Hi$ solves the following boundary value problem:
\begin{equation*}
\left\{\begin{aligned}
w_1-\al\Delta w_1=&\Hi\text{ for }|x|>\ep\\
w_1\bigl|_{|x|=\ep}=&0.
\end{aligned}
\right.
\end{equation*}
Then $w_2\equiv w_1-\Hi$ satisfies
\begin{equation*}
\left\{\begin{aligned}
w_2-\al\Delta w_2=&\al\Delta\Hi\text{ for }|x|>\ep\\
w_2\bigl|_{|x|=\ep}=&0.
\end{aligned}
\right.
\end{equation*}
The same $L^2$ estimate as in the proof of Lemma \ref{H1part} shows that
\begin{equation*}
\|w_2\|_{L^2(\Pi_\ep)}^2+\al\|\nabla w_2\|_{L^2(\Pi_\ep)}^2 \leq\al^2\|\Delta\Hi\|_{L^2(\Pi_\ep)}^2.
\end{equation*}
Since $\Hi=\eta\Hep$ and $\Hep$ is harmonic we have that
$$\Delta\Hi=\Delta\eta\, \Hep+2\nabla\eta\cdot\nabla\Hep$$
is a $C^\infty$ function compactly supported in the annulus $\{1\leq|x|\leq2\}$ which does not depend on $\ep$. The conclusion follows.
\end{proof}

To complete the estimate of $\uep$, it remains only to bound the term $(1 + \alpha \A_\ep)^{-1} \Hep$. The strategy in deriving this estimate is to compare $(1 + \alpha \A_\ep)^{-1}$, in $\Pi_\ep$, to $(1-\alpha \Deltar)^{-1}$ in all of $\real^2$. We recall that the vector field $K^\al$ was defined in relation \eqref{Kaldef}.
\begin{proposition}\label{Hpart}
There exists a constant  $C >0$, which depends only on $\al$, such that
\begin{equation*}
\|(1+\alpha \A_\ep )^{-1}  \Hep-  K^\al \|_{H^1(\Pi_\ep)}\leq C\ep|\log\ep|.
\end{equation*}
\end{proposition}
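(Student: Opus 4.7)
The plan is to convert the claim into a boundary-data estimate for a scalar Helmholtz resolvent, then lift and apply an elementary energy inequality. The key initial observation, already established in this section, is that since $\Hep=x^\perp/(2\pi|x|^2)$ is $x^\perp$ times a radial scalar, we have
\[
W_\ep := (1+\al\Aep)^{-1}\Hep = (1-\al\Deltaep)^{-1}\Hep,
\]
so $W_\ep$ solves $W_\ep - \al\Delta W_\ep = \Hep$ in $\piep$ with $W_\ep=0$ on $\{|x|=\ep\}$ and decay at infinity. On the other hand, since $\mathcal{G}_\al$ is the fundamental solution of $1-\al\Delta$ on $\R^2$, the convolution $K^\al=\mathcal{G}_\al\ast\Hep$ satisfies $K^\al - \al\Delta K^\al = \Hep$ in $\R^2\setminus\{0\}$ and decays at infinity by Lemma~\ref{Kal}(b). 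Thus $w_\ep := W_\ep - K^\al$ solves the homogeneous problem
\[
w_\ep - \al\Delta w_\ep = 0 \text{ in }\piep, \qquad w_\ep\big|_{|x|=\ep} = -K^\al\big|_{|x|=\ep}, \qquad w_\ep\to 0 \text{ at }\infty,
\]
and by Lemma~\ref{Kal}(a) its Dirichlet data is pointwise bounded by $C\ep|\log\ep|$; elliptic theory then forces $w_\ep$ to decay exponentially, so $w_\ep\in H^1(\piep)$ a priori.

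Next I would lift away the boundary value. Fix a cutoff $\rho\in C^\infty([1,\infty);[0,1])$ equal to $1$ near $1$ and vanishing on $[2,\infty)$, and set
\[
\phi_\ep(x) := -\rho(|x|/\ep)\,K^\al(x), \qquad x\in\piep.
\]
Then $\phi_\ep$ is supported in the annulus $\{\ep\le|x|\le 2\ep\}$, extends by $0$ past $|x|=2\ep$, and agrees with $w_\ep$ on $\partial\piep$. Using the pointwise bounds $|K^\al(x)|\le C|x|\bigl|\log|x|\bigr|$ and $|\nabla K^\al(x)|\le C\bigl|\log|x|\bigr|$ from Lemma~\ref{Kal}(a), the $1/\ep$ factor produced by $\rho'$ is absorbed by the $|x|\sim\ep$ size of $K^\al$, giving $|\phi_\ep|\lesssim\ep|\log\ep|$ and $|\nabla\phi_\ep|\lesssim|\log\ep|$ on a set of area $\sim\ep^2$. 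Integrating yields $\|\phi_\ep\|_{H^1(\piep)}\le C\ep|\log\ep|$.

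Finally, $\tilde w_\ep := w_\ep - \phi_\ep$ has zero trace on $\partial\piep$, decays at infinity, and satisfies $\tilde w_\ep - \al\Delta\tilde w_\ep = -\phi_\ep + \al\Delta\phi_\ep$ in $\piep$. Pairing with $\tilde w_\ep$ and integrating the Laplacian term by parts (the boundary terms at $|x|=\ep$ vanish by the zero trace of $\tilde w_\ep$, and those at infinity by the compact support of $\phi_\ep$ together with the decay of $\tilde w_\ep$) gives
\[
\|\tilde w_\ep\|_{L^2}^2 + \al\|\nabla \tilde w_\ep\|_{L^2}^2 \le \|\phi_\ep\|_{L^2}\|\tilde w_\ep\|_{L^2} + \al\|\nabla\phi_\ep\|_{L^2}\|\nabla\tilde w_\ep\|_{L^2}.
\]
Young's inequality yields $\|\tilde w_\ep\|_{H^1(\piep)}\le C(\al)\|\phi_\ep\|_{H^1(\piep)}\le C\ep|\log\ep|$, and the triangle inequality $\|w_\ep\|_{H^1}\le\|\tilde w_\ep\|_{H^1}+\|\phi_\ep\|_{H^1}$ concludes.

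The only nontrivial point is the first step: replacing the Stokes resolvent by the scalar Helmholtz resolvent in the Dirichlet problem (so that one can honestly test with $\tilde w_\ep$ and not worry about the pressure). The radial-invariance identity recorded on the preceding page settles this, and then everything reduces to a standard lift-plus-energy estimate whose rate $\ep|\log\ep|$ is dictated entirely by Lemma~\ref{Kal}(a)'s trace bound for $K^\al$ on $\{|x|=\ep\}$.
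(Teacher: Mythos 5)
Your proof is correct, and after the common first step it diverges genuinely from the paper's. Both arguments begin identically: the radial-invariance identity reduces the Stokes resolvent to the scalar Helmholtz resolvent, so $w_4:=(1+\al\Aep)^{-1}\H-K^\al$ solves $(1-\al\Delta)w_4=0$ in $\Piep$ with Dirichlet data $-K^\al\bigl|_{|x|=\ep}$, which Lemma~\ref{Kal}(a) bounds by $C\ep|\log\ep|$. From there the paper exploits the radial structure much more heavily: it passes to $F=\curl w_4$, derives a Neumann problem for $F$, extends $F$ by a constant inside the disk, inverts $1-\al\Delta$ on all of $\R^2$ to compute the boundary value $B_\ep$ explicitly, and then evaluates the boundary term in the energy identity \emph{exactly}, obtaining $\|w_4\|_{L^2}^2+\al\|\nabla w_4\|_{L^2}^2=2\pi\al^2\ep A_\ep(\tfrac\al\ep A_\ep-B_\ep)$. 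You instead use the textbook lift-and-absorb scheme: subtract a cutoff lifting $\phi_\ep=-\rho(|x|/\ep)K^\al$ supported in $\{\ep\le|x|\le2\ep\}$, whose $H^1$ norm is $O(\ep|\log\ep|)$ precisely because the $\ep^{-1}$ from $\rho'$ is compensated by $|K^\al|\lesssim|x|\bigl|\log|x|\bigr|$ and the annulus has area $\sim\ep^2$, and then run a standard energy estimate with no boundary contribution. Your route is shorter and avoids the entire $\overline{F}$/$B_\ep$ computation (which is not reused elsewhere in the paper); what the paper's computation buys is an exact identity for the energy rather than an upper bound, though only the upper bound is needed. The one point you should state more carefully is the a priori membership $w_4\in H^1(\Piep)$ needed to justify testing with $\tilde w_\ep$ and discarding the terms at infinity: "elliptic theory forces exponential decay" is true here, and is most cleanly seen from the radial ansatz (the decaying solution of the resulting modified Bessel ODE is exponentially small), or by a barrier argument for $|w_4|^2$; the paper is equally terse on this point, so it is a remark on presentation rather than a gap.
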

\begin{proof}
Since $\Hep$ is of the form $x^\perp$ times a radial function, we have that $w_3\equiv (1+\alpha \A_\ep )^{-1} \Hep=(1-\alpha \Deltaep )^{-1}\Hep$ is of the same form and solves the following boundary value problem:
\begin{equation*}
\left\{
\begin{array}{ll}
(1 - \alpha \Delta) w_3  = \H, & \text{ in } \Pi_\ep \\
\dv w_3 = 0, & \text{ in } \Pi_\ep\\
w_3 = 0  , & \text{ on } \partial \Pi_\ep.
\end{array}
\right.
\end{equation*}
Since $\H$ is divergence free in the whole of $\R^2$, we have that $K^\al=\mathcal{G}_\alpha \ast \H $ is also divergence free everywhere. Then $w_4\equiv w_3- K^\al $ satisfies the following boundary-value problem:
% \begin{equation}\label{eqw4}
% \left\{\begin{aligned}
% w_4-\al\Delta w_4=&0\text{ for }|x|>\ep\\
% w_4\bigl|_{|x|=\ep}=&- K^\al \bigl|_{|x|=\ep}.
% \end{aligned}
% \right.
% \end{equation}
% Let us define now $F=\curl w_4$.
% In summary, $N_j$ satisfies the following boundary-value problem:
\begin{equation} \label{Nsystem}
\left\{
\begin{array}{ll}
(1 - \alpha \Delta) w_4  = 0, & \text{ in } \Pi_\ep \\
\dv w_4 = 0, & \text{ in } \Pi_\ep\\
w_4 = - K^\al  , & \text{ on } \partial \Pi_\ep.
\end{array}
\right.
\end{equation}

Set
\begin{equation}\label{starFF}
F \equiv \curl w_4.
\end{equation}

Taking the $\curl$ of the first equation in \eqref{Nsystem} we obtain
\begin{equation}
\label{star2}
F - \alpha \Delta F = 0\quad\text{ in }\Pi_\ep.
\end{equation}

Note that, because $\dv w_4 = 0$, we have
\begin{equation}\label{curlLaplacian}
  \nabla^\perp F = \Delta w_4.
\end{equation}

Evaluating the first equation in \eqref{Nsystem} on $\partial \Pi_\ep$, using \eqref{curlLaplacian} and taking the inner product of the result with the unit vector $x^\perp / \ep$, which is tangent to $\partial \Pi_\ep$, yields
\[w_4 \cdot \frac{x^\perp}{\ep} - \alpha \nabla^\perp F \cdot \frac{x^\perp}{\ep} = 0 \quad \text{ on } |x|=\ep.\]

Set $\widehat{\n} \equiv x / \ep$, the {\em interior} unit normal to $\partial\Pi_\ep$. Then, since $\nabla^\perp F \cdot x^\perp / \ep = \nabla F \cdot \widehat{\n} = \partial F/\partial{\widehat{\n}}$, we have derived a Neumann boundary condition for $F$:
\begin{equation}\label{NeumannCond}
\frac{\partial F }{\partial {\widehat{\n}}} = -\frac{1}{\alpha} ( K^\al  )\cdot \widehat{\n}^\perp\qquad\text{at }|x|=\ep.
\end{equation}

Next we use the circular symmetry of $\H $ together with the radial symmetry of $\mathcal{G}_\alpha$ to deduce that $ K^\al  $ is a vector of the form $x^\perp$ times a radial function, see \eqref{Kalform}. Hence the right-hand-side of \eqref{NeumannCond} is a real number which depends only on $\alpha$, which is fixed, and on $\ep$. We denote it by $A_\ep$:
\begin{equation*}% \label{avare}
  A_\ep \equiv -\frac{1}{\alpha} K^\al \big|_{|x|=\ep}\cdot \frac{x^\perp}{\ep}.
\end{equation*}

With this notation we note, in particular, that
\begin{equation} \label{NjBdry}
  w_4\big|_{\partial \Pi_\ep} = \frac{\alpha}{\ep} A_\ep  x^\perp.
\end{equation}
We also observe at this point that, thanks to Lemma \ref{Kal}, we have that
\begin{equation}\label{sizeaep}
A_\ep=O(\ep|\log\ep|)\quad\text{as }\ep\to0.
\end{equation}

We have, thus far, deduced a Neumann boundary-value problem for $F$:
\begin{equation*}% \label{nsystem}
\left\{
\begin{array}{ll}
(1 - \alpha \Delta) F  = 0, & \text{ in } \Pi_\ep \\
\frac{\partial F }{\partial {\widehat{\n}}} = A_\ep , & \text{ on } \partial \Pi_\ep.
\end{array}
\right.
\end{equation*}

We already observed that $w_3$ and $K^\al$ are of the form $x^\perp$ times a radial function, so $w_4=w_3-K^\al$ is also of this form. Therefore  $F = \curl w_4$ is radially symmetric in $\Pi_\ep$. Hence the restriction of $F$ to the boundary $\partial \Pi_\ep$ depends only on $\alpha$, fixed, and on $\ep$. We denote this constant by $B_\ep$:
\begin{equation}\label{bvare}
 B_\ep \equiv F\big|_{\partial\Pi_\ep}.
\end{equation}

Our next step is to extend $F$ to all of $\real^2$, find an equation satisfied by this extension and solve it. To this end consider the continuous extension of $F$ given by
\begin{equation}
\label{star3}
\overline{F}  \equiv \left\{
                            \begin{array}{l}
                                F, \text{ if } |x|>\ep\\
                                B_\ep , \text{ if } |x|\leq \ep.
                            \end{array}
                       \right.
\end{equation}

Let us compute $\Delta \overline{F} $ in the sense of distributions on $\R^2$. Fix $\vph \in C^\infty_c(\real^2)$. Then
\begin{align*}%\label{DeltaBarn}
\langle \Delta \overline{F} , \vph \rangle
&= \langle \overline{F} , \Delta \vph \rangle\\
&= \int_{\{|x|>\ep\}} F \Delta \vph + \int_{\{|x|\leq \ep\}} B_\ep \Delta \vph  \\
&= - \int_{\{|x|>\ep\}}\nabla F \nabla \vph + \int_{\{|x|=\ep\}}F\nabla\vph\cdot (-\widehat{\n}) +
B_\ep \int_{\{|x|= \ep\}} \nabla \vph \cdot \widehat{n}   \\
&=  \int_{\{|x|>\ep\}}\vph \Delta F   - \int_{\{|x|=\ep\}} \vph \nabla F  \cdot (-\widehat{\n}) \\
&=  \int_{\{|x|>\ep\}}\vph \Delta F   +A_\ep  \int_{\{|x|=\ep\}} \vph.
\end{align*}

Recalling \eqref{star2} and \eqref{star3} we infer that
\begin{equation*}%\label{barnequation}
(1-\alpha\Delta)\overline{F}  = B_\ep  \chi_{_{\{|x|\leq \ep\}}} - \alpha A_\ep  \delta_{\{|x|=\ep\}}\quad\text{in } \mathscr{D}'(\R^2).
\end{equation*}

We can now invert the operator $1-\alpha\Deltar$ to find a formula for $\overline{F} $:
\begin{equation*}%\label{formulabarn}
\overline{F}  = \mathcal{G}_\alpha \ast \left(B_\ep  \chi_{_{\{|x|\leq \ep\}}} - \alpha A_\ep  \delta_{\{|x|=\ep\}}\right).
\end{equation*}

Evaluating the above expression at 0 we have, by definition of $\overline{F} $,
\[ B_\ep  = \overline{F} (0)= B_\ep  \int_{\{|x|\leq \ep\}} \mathcal{G}_\alpha - \alpha A_\ep  \int_{\{|x|=\ep\}}
\mathcal{G}_\alpha \,ds.\]

It follows, see property (P2) for $\mathcal{G}_\alpha$, that
\begin{equation*} %\label{bvareavare}
  B_\ep  = -\alpha A_\ep  \frac{\int_{\{|x|=\ep\}}
\mathcal{G}_\alpha \,ds}{\int_{\{|x|> \ep\}} \mathcal{G}_\alpha }.
\end{equation*}

Using the properties of the kernel $\mathcal{G}_\alpha$ and the estimate of the size of $A_\ep$ given in \eqref{sizeaep} we can estimate the size of $B_\ep$. More precisely, relation \eqref{galphaposandint1} implies  that
\begin{equation*}
\int_{\{|x|> \ep\}} \mathcal{G}_\alpha \to \int_{\R^2} \mathcal{G}_\alpha=1 \quad\text{as }\ep\to0,
\end{equation*}
and, thanks to \eqref{Galphalogat0}, we have that
\begin{equation*}
\int_{\{|x|=\ep\}}\mathcal{G}_\alpha \,ds=O(\ep|\log\ep|).
\end{equation*}
Combining these two bounds with \eqref{sizeaep} implies that
\begin{equation}
\label{sizebep}
B_\ep=O(\ep^2|\log\ep|^2) \quad\text{as }\ep\to0.
\end{equation}

\medskip

Knowing $B_\ep$, we can compute the $H^1$ norm of $w_4$. We multiply  the first equation of \eqref{Nsystem} by $w_4$ and integrate on $\piep$. We obtain
\begin{equation*}
0=\int_{\Pi_\ep}(w_4-\al\Delta w_4)\cdot w_4
=\int_{\Pi_\ep}|w_4|^2+\al\int_{\Pi_\ep}|\nabla w_4|^2+\al\int_{\partial \Pi_\ep}\frac{\partial w_4 }{\partial {\widehat{\n}}}\cdot w_4.
\end{equation*}
Given that $w_4$ is of the form $x^\perp$ times by a radial function, we can show, through an easy calculation, the following identity:
\begin{equation*}
x\cdot\nabla w_4=x^\perp\curl w_4-w_4.
\end{equation*}
Recalling \eqref{NjBdry}, \eqref{bvare} and \eqref{starFF} we observe that
\begin{align*}
\int_{\partial \Pi_\ep}\frac{\partial w_4 }{\partial {\widehat{\n}}}\cdot w_4
&=\frac1\ep \int_{\partial \Pi_\ep}x\cdot\nabla w_4\cdot w_4\\
&=\frac1\ep \int_{\partial \Pi_\ep}(x^\perp\curl w_4-w_4)\cdot w_4\\
&=\frac1\ep \int_{\partial \Pi_\ep}(B_\ep-\frac\al\ep A_\ep)x^\perp\cdot x^\perp\frac\al\ep A_\ep\\
&=2\pi\al\ep A_\ep(B_\ep-\frac\al\ep A_\ep).
\end{align*}

Finally, we conclude that
\begin{equation*}
\int_{\Pi_\ep}|w_4|^2+\al\int_{\Pi_\ep}|\nabla w_4|^2
=-\al\int_{\partial \Pi_\ep}\frac{\partial w_4 }{\partial {\widehat{\n}}}\cdot w_4=2\pi\al^2\ep A_\ep(\frac\al\ep A_\ep-B_\ep).
\end{equation*}

Recalling that $w_4=(1+\alpha \A_\ep )^{-1}  \Hep-  K^\al$ and using \eqref{sizeaep} and \eqref{sizebep} completes the proof.
\end{proof}

%If we put together Lemmas \ref{WvareAsymp}, \ref{H1part}, \ref{Hipart}  with Proposition \ref{Hpart} then we have proved Sobolev space estimates for $\uep$. Moreover, since the bounds in Proposition \ref{H1part} depend only on $L^1$ and $L^\infty$ norms of $q_\ep$, which are constant in time, and since the bounds in Lemmas \ref{WvareAsymp} and \ref{Hipart}  and in  Proposition \ref{Hpart} are time-independent, it follows that our Sobolev estimates are uniform in time.
We summarize the results of this section in the result below.

\begin{theorem} \label{uvareW1plocests}
We have that $\uep-\ga K^\al-m\Hi$ is bounded in $L^\infty_{loc}([0,\infty) ;H^1(\Pi_\ep))$ independently of $\ep$.
%In particular, $\uep$ is bounded independently of $\ep$ in the space $L^\infty_{\loc}([0,\infty) ;H^1_{loc}(\overline{\Pi_\ep}))\cap L^\infty_{loc}([0,\infty) ;L^p(\Pi_\ep))$ for all $p\in(2,\infty)$.
\end{theorem}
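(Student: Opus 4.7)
The plan is to simply assemble the estimates of Lemmas \ref{H1part}, \ref{Hipart} and Proposition \ref{Hpart} via the decomposition \eqref{decuep} of $\uep$. Indeed, the modified Biot-Savart law gives
\begin{equation*}
\uep=(1+\alpha \A_\ep )^{-1} [\K(q_\ep) + m (\Hep-\Hi)]
+\ga(1+\alpha \A_\ep )^{-1}\Hep+m (1+\alpha \A_\ep )^{-1}\Hi,
\end{equation*}
so adding and subtracting $\ga K^\al$ and $m\Hi$ to produce the target quantity on the left-hand side yields
\begin{align*}
\uep-\ga K^\al-m\Hi
&=(1+\alpha \A_\ep )^{-1}\bigl[\K(q_\ep) + m (\Hep-\Hi)\bigr]\\
&\quad +\ga\bigl[(1+\alpha \A_\ep )^{-1}\Hep- K^\al\bigr]
+m\bigl[(1+\alpha \A_\ep )^{-1}\Hi-\Hi\bigr].
\end{align*}

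The first step is to bound the first term in $L^\infty_{loc}([0,\infty);H^1(\Pi_\ep))$ uniformly in $\ep$, which is precisely the content of Lemma \ref{H1part}. The second step is to bound the second term: this is handled by Proposition \ref{Hpart}, which gives an $H^1$ bound of size $C|\ga|\,\ep|\log\ep|$, in particular uniformly bounded (and vanishing) as $\ep \to 0$. The third step is to bound the last term, which is immediate from Lemma \ref{Hipart} by a constant $C(\al)|m|$ depending only on $\al$ and on the conserved mass $m=\int q_0$.

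There is essentially no obstacle: all of the technical difficulty has been absorbed into the three preceding lemmas (in particular the delicate energy-type argument for $w_4$ in Proposition \ref{Hpart}, which is where the logarithmic growth $\ep|\log\ep|$ is traced through the Neumann problem for $F$). The only remaining observation is that the constants $|\ga|$ and $|m|$ in the second and third terms are independent of $\ep$ by hypothesis (the circulation is fixed, and $m=\int_{\Pi_\ep}q_0$ is independent of $\ep$ for small $\ep$ since $q_0$ has compact support away from the origin). Summing the three $\ep$-uniform $L^\infty_{loc}([0,\infty);H^1(\Pi_\ep))$ bounds yields the claim.
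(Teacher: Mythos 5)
Your proposal is correct and follows exactly the paper's own argument: the same decomposition \eqref{decuep} of $\uep-\ga K^\al-m\Hi$ into three terms, each controlled by Lemma \ref{H1part}, Proposition \ref{Hpart} and Lemma \ref{Hipart} respectively. Nothing further is needed.
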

\begin{proof}
We use the decomposition \eqref{decuep} to write
\begin{multline*}
\uep-\ga K^\al-m\Hi=
(1+\alpha \Aep )^{-1} (\K(q_\ep) + m (\Hep-\Hi)) \\
+\ga \bigl[(1+\alpha \Aep )^{-1}\Hep-K^\al\bigr]
+m \bigr[(1+\alpha \Aep )^{-1}\Hi-\Hi\bigr].
\end{multline*}
The first term on the rhs is bounded in $H^1(\Pi_\ep)$ as a consequence of Lemma \ref{H1part}, the second term is bounded in $H^1(\Pi_\ep)$ thanks to Proposition \ref{Hpart} and the $H^1$ bound for the third term follows from Lemma \ref{Hipart}.
%To show the $L^p$ bound let us extend $\uep$  to $\R^2$ with zero values for $|x|\leq0$. We denote the extension of $\uep$ by $\uept$. Since both $\uep$ and $\Hi$ vanish as $|x|=\ep$, clearly $\uept-\ga K^\al-m\Hi$ is bounded in $L^\infty_{loc}([0,\infty) ;H^1(\R^2))$. By Sobolev embedding we have that $\uept-\ga K^\al-m\Hi$ is bounded in $L^\infty_{loc}([0,\infty) ;L^p(\R^2))$. The $L^p$ bound for $\uep$ follows after observing that both $\Hi$ and $K^\al$ are bounded by $C/(1+|x|)$, see Lemma \ref{Kal}.
\end{proof}

\section{Temporal estimates and passing to the limit}
\label{sect5}

We will now prove the convergence result, which is part b) of Theorem \ref{mainthm}.

We define
\begin{equation}\label{wepdef}
\wep=K_\ep(\qep)+m\H.
\end{equation}

Let $\uept$ be the extension of $\uep$ to the whole of $\R^2$ with zero values for $|x|\leq\ep$. We define in a similar manner $\qept$ and $\wept$. Since $\uep$ and $\wep$ are tangent to the boundary, we infer that $\uept$ and $\wept$ are divergence free in the whole of $\R^2$.

First we note that $K^\al\in H^1_{loc}(\R^2)$. Indeed, this follows from parts (a) and (b) of Lemma \ref{Kal} together with properties (P3) and (P4) of $\mathcal{G}_{\alpha}$. Therefore, using Theorem \ref{uvareW1plocests} together with the vanishing of $\uep$ on the boundary of $\Piep$, we obtain that $\uept-\ga K^\al-m\Hi$ is bounded in $L^\infty_{loc}([0,\infty)  ;H^1(\R^2))$. We infer that there exists some divergence free limit vector field $u$ such that
\begin{equation}\label{uatinf}
u-\ga K^\al-m\Hi\in L^\infty_{loc}([0,\infty)  ;H^1(\R^2)),
\end{equation}
and, passing to subsequences as needed,
\begin{equation}\label{convu}
\uept-u\rightharpoonup 0 \text{ in } L^\infty_{loc}([0,\infty) ;H^1(\R^2))\text{ weak$\ast$ as }\ep\to0.
\end{equation}
In view of the discussion above we have, in particular, that $\uept$ is bounded in $L^\infty_{loc}([0,\infty) ;H^1_{loc}(\R^2))$ and $u$ belongs to $L^\infty_{loc}([0,\infty) ;H^1_{loc}(\R^2))$.

We also find, thanks to Lemma \ref{WvareAsymp}, that $\wept$ is bounded in $L^\infty(\R_+\times\R^2)$ independently of $\ep$, so we can further assume that
\begin{equation}\label{convw}
\wept\rightharpoonup w \quad\text{in }L^\infty(\R_+\times\R^2)\text{ weak$\ast$ as }\ep\to0
\end{equation}
and
\begin{equation}\label{convq}
\qept\rightharpoonup q \quad\text{in }L^\infty(\R_+;L^1(\R^2)\cap L^\infty(\R^2))\text{ weak$\ast$ as }\ep\to0.
\end{equation}

Recall the equation for the potential vorticity
\begin{equation*}
\partial_t \qep+\uep\cdot\nabla\qep=0\quad\text{in }(0,\infty)\times\piep.
\end{equation*}
Since $\uep$ is tangent to the boundary of $\Pi_\ep$ (it even vanishes), the extensions $\uept$ and $\qept$ satisfy the same PDE in all of $\R^2$:
\begin{equation}\label{eqt}
\partial_t \qept+\uept\cdot\nabla\qept=0\quad\text{in }(0,\infty)\times\R^2.
\end{equation}

We observed above that Theorem \ref{uvareW1plocests} implies that $\uept-\ga K^\al-m\Hi$ is bounded in $L^\infty_{loc}([0,\infty)  ;H^1(\R^2))\hookrightarrow L^\infty_{loc}([0,\infty)  ;L^4(\R^2))$. From Lemma \ref{Kal} and the definition of $\Hi$ we observe that $\ga K^\al+m\Hi\in L^4(\R^2)$ and, therefore, $\uept$ is bounded in $L^\infty_{loc}([0,\infty)  ;L^4(\R^2))$. Since $\qept$  is bounded in $L^\infty(\R_+  ;L^4(\R^2))$ we infer that $\uept\qept$ is bounded in the space $L^\infty_{loc}([0,\infty);L^2(\R^2))$, so that   $\uept\cdot\nabla\qept=\dive(\uept\qept)$ is bounded in $L^\infty_{loc}([0,\infty);H^{-1}(\R^2))$. Then $\partial_t \qept=-\uept\cdot\nabla\qept$ is also bounded in $L^\infty_{loc}([0,\infty);H^{-1}(\R^2))$. We infer that the $\qept$ are equicontinuous in time with values in $H^{-1}(\R^2)$. Using the compactness of the embedding $H^{-1}(\R^2)\hookrightarrow H^{-2}_{loc}(\R^2)$ and the Ascoli theorem, we infer that, passing to subsequences if necessary, $\qept\to q$ in $C^0([0,\infty);H^{-2}_{loc}(\R^2)$. Recalling that $\qept$ is bounded in  $L^\infty(\R_+;L^2(\R^2))$ we finally deduce that
\begin{equation}\label{strongconv}
\qept\to q\quad\text{in }C^0([0,\infty); H^{-1}_{loc}(\R^2))\text{ strongly as }\ep\to0.
\end{equation}
The weak convergence  of $\uept$ in $H^1$ obtained in \eqref{convu} then implies that $\uept\qept\to uq$ in the sense of distributions. Thus we also have that $\dive(\uept\qept)\to \dive(uq)$ in the sense of the distributions in $\R^2$. Hence, we can pass to the limit $\ep\to0$ in \eqref{eqt} to obtain that
\begin{equation*}
\partial_t q+u\cdot\nabla q=0\quad\text{in }(0,\infty)\times\R^2.
\end{equation*}
In addition, the strong convergence found in \eqref{strongconv} implies that we also have convergence for the initial data: $\qept(0,\cdot)\to q(0,\cdot)$ as $\ep\to0$ in $H^{-1}_{loc}(\R^2)$. We conclude that we have an initial condition for the equation of $q$:
\begin{equation*}
q(0,x)=q_0(x).
\end{equation*}

Next we will obtain the relationship between $u$ and $q$ expressed in the system of PDE satisfied by $q$. We already know that $u$ is divergence-free, as limit of $\uep$, which are divergence-free. We will proceed to show that $\curl(1-\al\Delta) u=q+\ga\delta$. This will be done in two steps: first we determine the equation for the limit of $\wept$ and then we determine the equation for the limit of $\uept$. The following lemma deals with the first step.
\begin{lemma}\label{lemw}
We have that $w\in L^\infty(\R_+\times\R^2)$, that $\dive w=0$ and that $\curl w=q$.
\end{lemma}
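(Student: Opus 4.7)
The plan is to dispatch the three claims in order of difficulty, as two are essentially immediate and only one requires real work. The $L^\infty$ bound on $w$ is inherited from the weak-$*$ convergence \eqref{convw} together with the uniform $L^\infty$ estimate for $\wept$ provided by Lemma~\ref{WvareAsymp} and the fact that $\wept$ extends $\wep$ by zero. Similarly, $\dive w=0$ passes to the weak-$*$ limit from the divergence-free property of every $\wept$ on $\R^2$, which is recalled in the paragraph preceding \eqref{uatinf} and uses that $\wep$ is tangent to $\partial\Piep$.

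The substantive claim is $\curl w=q$. My strategy is to compute $\curl\wept$ distributionally on $\R^2$. Fix a test function $\varphi\in C^\infty_c(\R^2)$. Since $\wept$ vanishes in $|x|\leq\ep$,
\begin{equation*}
\langle\curl\wept,\varphi\rangle
= -\int_{\R^2}\wept\cdot\nabla^\perp\varphi
= -\int_{\Piep}\wep\cdot\nabla^\perp\varphi.
\end{equation*}
Integration by parts on $\Piep$, combined with $\curl\K(\qep)=\qep$ in $\Piep$ and $\curl\H=0$ away from the origin, yields an interior contribution $\int_{\Piep}\qep\varphi$ together with a boundary term of the form $\int_{\partial\Piep}(\wep\cdot\tau)\varphi\,ds$, where $\tau$ is a unit tangent to $\partial\Piep$. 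By \eqref{convw} the left-hand side converges to $\langle\curl w,\varphi\rangle$, and by \eqref{convq} the interior contribution converges to $\int_{\R^2}q\varphi$. The whole problem therefore reduces to showing that the boundary integral vanishes as $\ep\to0$.

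This last step, which I expect to be the only delicate one, is where the construction of $\wep$ pays off. The term $m\H$ was added precisely to kill the circulation: the circulation of $\K(\qep)$ on $\partial\Piep$ equals $-m$ while $\H$ has unit circulation, so $\int_{\partial\Piep}\wep\cdot\tau\,ds=0$. I may therefore subtract $\varphi(0)$ from $\varphi$ in the boundary integral at no cost and estimate
\begin{equation*}
\Bigl|\int_{\partial\Piep}(\wep\cdot\tau)\varphi\,ds\Bigr|
= \Bigl|\int_{\partial\Piep}(\wep\cdot\tau)(\varphi-\varphi(0))\,ds\Bigr|
\leq \|\wep\|_{L^\infty(\partial\Piep)}\cdot 2\pi\ep\cdot\ep\|\nabla\varphi\|_{L^\infty}.
\end{equation*}
The $\ep$-uniform $L^\infty$ bound on $\wep$ from Lemma~\ref{WvareAsymp} makes this $O(\ep^2)$, which tends to zero and completes the identification $\curl w=q$.
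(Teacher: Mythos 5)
Your proof is correct, but it takes a genuinely different route from the paper's. The paper never produces a boundary term: it tests against $\phimu=\etamu\varphi$ with $\etamu(x)=\eta(x/\mu)$, which vanishes identically near $\partial\Piep$ once $\ep<\mu$, and then performs a double limit, first $\ep\to0$ using the weak-$*$ convergences \eqref{convw} and \eqref{convq}, then $\mu\to0$ using $\phimu\rightharpoonup\varphi$ in $H^1$ and $w\in L^2_{loc}$. You instead test directly against $\varphi$, accept the boundary integral $\int_{\partial\Piep}(\wep\cdot\tau)\varphi\,ds$, and kill it by combining the zero total circulation of $\wep=\K(\qep)+m\H$ (circulation $-m$ for $\K(\qep)$ plus $m$ for $m\H$, exactly as recorded in Section \ref{sect3}) with the uniform $L^\infty$ bound of Lemma \ref{WvareAsymp} and the $O(\ep)$ oscillation of $\varphi$ on $\{|x|=\ep\}$, getting $O(\ep^2)$. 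Your argument is more direct (a single limit, no cutoff) and makes visible \emph{why} the correction $m\H$ is built into $\wep$; the paper's cutoff argument buys insensitivity to the boundary behaviour of $\wep$ (no circulation computation and no trace of $\wep$ on $\partial\Piep$ is needed) at the price of the extra $\mu$-limit. Two small points to tidy up: since the convergences \eqref{convw}, \eqref{convq} are weak-$*$ in space-time, you should take $\varphi\in C^\infty_c((0,\infty)\times\R^2)$ and subtract $\varphi(t,0)$ rather than work at fixed $t$; and the integration by parts with a boundary trace of $\wep$ deserves a word of justification ($\K(\qep)\in W^{1,p}_{loc}$ suffices), exactly as in the computation \eqref{star}.
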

\begin{proof}
We already know that $w\in L^\infty(\R_+\times\R^2)$ and that $\dive w=0$. Let us compute $\curl w$.
Let $\varphi\in C^\infty_c((0,\infty)\times\R^2)$ be a test function and choose some $\mu>0$. Let $\etamu(x)\equiv \eta(x/\mu)$ and $\phimu=\etamu\varphi$, where $\eta$ was introduced on page \pageref{eta}. We assume that $\ep<\mu$. We multiply $\qept$ by $\phimu$ and we integrate by parts, using that $\phimu$ is compactly supported in $\Pi_\ep$ and that $\qep=\curl\wep$:
\begin{align*}
\int_0^\infty\int_{\R^2}\qept\phimu
&=\int_0^\infty\int_{\Pi_\ep}\qep\phimu\\
&=\int_0^\infty\int_{\Pi_\ep}\curl\wep\phimu\\
&=-\int_0^\infty\int_{\Pi_\ep}\wep\cdot\nabla^\perp\phimu\\
&=-\int_0^\infty\int_{\R^2}\wept\cdot\nabla^\perp\phimu.
\end{align*}
We send $\ep\to0$ and use the weak convergences found in \eqref{convw} and in \eqref{convq} to obtain that
\begin{equation*}
\int_0^\infty\int_{\R^2}q\phimu=-\int_0^\infty\int_{\R^2}w\cdot\nabla^\perp\phimu.
\end{equation*}
One can easily check that $\phimu\to\varphi$ weakly in $H^1$ as $\mu\to0$. Recalling that $w$ is bounded in space and time and, hence, it belongs to $L^2_{loc}$, and observing that $\phimu$ is supported in a compact set independent of $\mu$, we can pass to the limit $\mu\to0$ above to obtain that
\begin{equation*}
\int_0^\infty\int_{\R^2}q\varphi=-\int_0^\infty\int_{\R^2}w\cdot\nabla^\perp\varphi.
\end{equation*}
This means that $\curl w=q$ in the sense of the distributions. This concludes the proof of the lemma.
\end{proof}

The second step consists in computing $u-\al\Delta u$ in terms of $w$.
\begin{lemma}\label{propu}
We have that $u-\al\Delta u=w+\ga H$.
\end{lemma}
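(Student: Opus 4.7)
The plan is to establish the identity by testing the modified Biot-Savart relation $\S(\qep) = (1 + \al\Aep)\uep$ against divergence-free test functions supported away from the origin, passing to the $\ep \to 0$ limit via the convergences \eqref{convu} and \eqref{convw}, and then eliminating a residual distributional pressure using the asymptotic structure of $u$.

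I begin by fixing $\varphi \in C^\infty_{c,\sigma}((0,\infty)\times(\R^2\setminus\{0\}))$. For all $\ep$ small enough that $\supp\varphi \subset [0,\infty)\times\Piep$, both $\varphi$ and $\Delta\varphi$ are divergence-free and compactly supported in $\Piep$ for each $t$, so $\Pep\Delta\varphi = \Delta\varphi$ and $(1+\al\Aep)\varphi = \varphi - \al\Delta\varphi$. Combining self-adjointness of $\Aep$ with the modified Biot-Savart law \eqref{mBS} gives
\begin{equation*}
\int_0^\infty\!\!\int_{\R^2}\uept\cdot(\varphi-\al\Delta\varphi) = \int_0^\infty\!\!\int_{\Piep}(1+\al\Aep)\uep\cdot\varphi = \int_0^\infty\!\!\int_{\Piep}(\wep+\ga\H)\cdot\varphi.
\end{equation*}
Letting $\ep\to 0$ via \eqref{convu} and \eqref{convw} yields $\int u\cdot(\varphi-\al\Delta\varphi) = \int(w+\ga\H)\cdot\varphi$ for every such $\varphi$. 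By de Rham's theorem on the punctured plane, this produces $p \in \mathcal{D}'((0,\infty)\times(\R^2\setminus\{0\}))$ with $(1-\al\Delta)u - w - \ga\H = \nabla p$; since each of $u$, $w$ (Lemma~\ref{lemw}) and $\H = \nabla^\perp(\log|x|/(2\pi))$ is divergence-free on all of $\R^2$, taking spatial divergence shows $\Delta p = 0$ on $(0,\infty)\times(\R^2\setminus\{0\})$, so $p(t,\cdot)$ is harmonic on the punctured plane for a.e.\ $t$.

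The remaining step is to show $\nabla p\equiv 0$. Writing $u = \ga K^\al + m\Hi + \tilde u$ with $\tilde u \in L^\infty_{\loc}(\R_+;H^1(\R^2))$ by \eqref{uatinf}, and using $(1-\al\Delta)K^\al = \H$, $\Hi = \H$ on $\{|x|\geq 2\}$, and $\Delta\Hi$ smooth and compactly supported, one obtains
\begin{equation*}
\nabla p = (1-\al\Delta)\tilde u - (w-m\H) + \al m\,\Delta\Hi.
\end{equation*}
The bound $\wep-m\H = \K(\qep) = O(|x|^{-2})$ from the proof of Lemma~\ref{WvareAsymp}, combined with $\tilde u \in H^1(\R^2)$ and the exponential decay of the Bessel kernel governing $K^\al$, forces $\nabla p\to 0$ at infinity, so the Laurent expansion of $p(t,\cdot)$ about the origin carries no positive-power modes. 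The logarithmic and negative-power modes are then ruled out by the local regularity of $\nabla p$ near $0$: the $\ga\H$-singularity (of size $1/|x|$) cancels between $(1-\al\Delta)u$ and $w+\ga\H$, leaving $\nabla p$ with $H^{-1}_{\loc}$-regularity, which is incompatible with the $r^{-n-1}$ singularities of negative-power Laurent modes and with the $1/r$ behavior of the logarithmic mode. Hence $p(t,\cdot)$ is constant and $u - \al\Delta u = w + \ga\H$.

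I expect the main obstacle to be the final elimination of $\nabla p$. Decay at infinity removes positive-power Laurent modes in a rather routine way, but excluding the logarithmic mode $a_0\log|x|$ at the origin requires a careful accounting of how the $\ga\H$-singularity of the Biot-Savart kernel cancels between the two sides of the identity, drawing on the sharp asymptotics of $K^\al$ from Lemma~\ref{Kal} together with the uniform $H^1$ estimates of Section~\ref{sect4}.
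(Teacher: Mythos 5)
Your reduction to the limit identity against divergence-free test fields supported in the punctured plane is fine, and so is the de Rham step producing $(1-\al\Delta)u-w-\ga\H=\nabla p$ with $p$ harmonic on $\R^2\setminus\{0\}$. The genuine gap is the elimination of the logarithmic Laurent mode. You claim that the $H^{-1}_{loc}$ regularity of $\nabla p$ near the origin is incompatible with the $1/r$ behavior of $\nabla(a_0\log|x|)=a_0x/|x|^2$; this is false, since $x/|x|^2\in L^{3/2}_{loc}(\R^2)\subset H^{-1}_{loc}(\R^2)$. The log mode is exactly as singular as $\H$ itself and passes every local regularity test available to you; moreover, being a single-valued gradient on the punctured plane, it is annihilated by every test field in your class, including the ``wrapping'' ones, so nothing in your argument excludes an error of the form $a_0x/|x|^2$ in the asserted identity. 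What actually kills this mode is that $(1-\al\Delta)u-w-\ga\H$ is divergence-free as a distribution on all of $\R^2$, whereas $\dive(a_0x/|x|^2)=2\pi a_0\delta$; to exploit this you must show that the identity $T=\nabla p$ persists across the origin, a removable-singularity statement you never establish. (Two smaller soft spots: ``$\nabla p\to0$ at infinity'' does not follow pointwise from $(1-\al\Delta)\tilde u\in H^{-1}$ without first invoking harmonicity and interior estimates; and your displayed formula for $\nabla p$ has the wrong sign on $\al m\,\Delta\Hi$ and conflates $\H$ with $\Hi$ near the origin, though both are repairable.)

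The paper's proof avoids the punctured-plane analysis entirely: it tests against $\Psi_\mu=\nabla^\perp(\etamu\Phi)$ with the normalization $\Phi(t,0)=0$, sends $\ep\to0$ and then $\mu\to0$, and uses $\langle\delta,(\etamu-1)\Phi\rangle=-\Phi(t,0)=0$ to show that the identity holds against \emph{all} divergence-free test fields on $\R^2$, not just those supported away from the origin. That is precisely the removable-singularity step your argument is missing; once the test class is the full plane, $T$ is the gradient of a function harmonic on all of $\R^2$ and vanishes by decay. You could salvage your route by inserting the same cutoff argument (or an equivalent removability lemma for the point $\{0\}$), but as written the Laurent analysis does not close.
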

\begin{proof}
Let $\Psi\in C^\infty_{c,\sigma}((0,\infty)\times\R^2;\R^2)$ be a divergence-free test vector field and choose some $\mu>0$. We assume $\ep<\mu$.

Since $\Psi$ is divergence-free, there exists $\Phi\in C^\infty((0,\infty)\times\R^2;\R)$, compactly supported in time, such that $\Psi=\nabla^\perp\Phi$. We can assume, without loss of generality, that $\Phi(t,0)=0$ for all $t$.

As before, let $\etamu(x)=\eta(x/\mu)$ and $\Psi_\mu=\nabla^\perp(\etamu\Phi)$. Then $\Psi_\mu$ is divergence free and compactly supported in $\Pi_\ep$ so $(1+\al\Aep)\Psi_\mu=(1-\al\Delta)\Psi_\mu$. Recall that $(1+\al\Aep)\uep=\wep+\ga H$, see \eqref{mBS} and the definition of $\wep$ given in relation \eqref{wepdef}. We write
\begin{align*}
\int_0^\infty\int_{\R^2}\uept\cdot (\Psi_\mu-\al\Delta\Psi_\mu)
&=\int_0^\infty\int_{\Pi_\ep}\uep\cdot (1+\al\Aep)\Psi_\mu\\
&=\int_0^\infty\int_{\Pi_\ep}(1+\al\Aep)\uep\cdot \Psi_\mu\\
&=\int_0^\infty\int_{\Pi_\ep}\wep\cdot \Psi_\mu+\ga \int_0^\infty\int_{\Pi_\ep}\H\cdot \Psi_\mu\\
&=\int_0^\infty\int_{\R^2}\wept\cdot \Psi_\mu+\ga \int_0^\infty\int_{\R^2}\H\cdot \Psi_\mu.
\end{align*}
We now let $\ep\to0$ and use \eqref{convu} and \eqref{convw} to pass to the limit. We obtain
\begin{align*}
\int_0^\infty\int_{\R^2}u\cdot (\Psi_\mu-\al\Delta\Psi_\mu)
=\int_0^\infty\int_{\R^2}w\cdot \Psi_\mu+\ga \int_0^\infty\int_{\R^2}\H\cdot \Psi_\mu.
\end{align*}
We rewrite the last term above in the following form:
\begin{align*}
\int_0^\infty\int_{\R^2}\H\cdot \Psi_\mu
&=\int_0^\infty\int_{\R^2}\H\cdot \nabla^\perp(\etamu\Phi)\\
&=\int_0^\infty\int_{\R^2}\H\cdot \nabla^\perp((\etamu-1)\Phi)+\int_0^\infty\int_{\R^2}\H\cdot \nabla^\perp\Phi\\
&=\int_0^\infty\int_{\R^2}\H\cdot \nabla^\perp((\etamu-1)\Phi)+\int_0^\infty\int_{\R^2}\H\cdot \Psi.
\end{align*}
We now use that $\curl\H=\delta$, we recall that $\etamu-1$ is $C^\infty$ and compactly supported, and we write the following sequence of equalities in the sense of the distributions $\mathscr{D}'(\R^2)$:
\begin{multline*}
\int_0^\infty\int_{\R^2}\H\cdot \nabla^\perp((\etamu-1)\Phi)
=\int_0^\infty\langle \H,\nabla^\perp((\etamu-1)\Phi)\rangle
=-\int_0^\infty\langle \curl\H,(\etamu-1)\Phi\rangle\\
=-\int_0^\infty\langle \delta,(\etamu-1)\Phi\rangle
=\int_0^\infty\Phi(t,0)\,dt=0,
\end{multline*}
where we also used that $\etamu(0)=0$.

We infer that
\begin{multline*}
\int_0^\infty\int_{\R^2}(u\cdot \Psi_\mu+\al\nabla u\cdot\nabla\Psi_\mu)
=\int_0^\infty\int_{\R^2}u\cdot (\Psi_\mu-\al\Delta\Psi_\mu)\\
=\int_0^\infty\int_{\R^2}w\cdot \Psi_\mu+\ga \int_0^\infty\int_{\R^2}\H\cdot \Psi.
\end{multline*}
Since $\Phi(t,0)=0$ one can easily check that $\Psi_\mu\to\Psi$ in $L^\infty(\R_+;H^1(\R^2))$ weak$\ast$ and, moreover, the support of $\Psi_\mu$ is included in a compact set uniformly with respect to $\mu$. Since $u\in L^\infty_{loc}([0,\infty_;H^1_{loc}(\R^2))$ one can pass to the limit $\mu\to0$ above to obtain
\begin{equation*}
\int_0^\infty\int_{\R^2}(u\cdot \Psi+\al\nabla u\cdot\nabla\Psi)
=\int_0^\infty\int_{\R^2}w\cdot \Psi+\ga \int_0^\infty\int_{\R^2}\H\cdot \Psi.
\end{equation*}
This can be written in the following form in the sense of the distributions:
\begin{equation*}
\langle u-\al\Delta u-w-\ga H,\Psi\rangle=0
\end{equation*}
for all divergence-free test vector fields $\Psi\in C^\infty_{c,\sigma}((0,\infty)\times\R^2;\R^2)$. Since the vector field $ u-\al\Delta u-w-\ga H$ is divergence free, we deduce from the relation above that it must vanish. This completes the proof of the proposition.
\end{proof}

Recall that $\curl H=\delta$ in $\R^2$. Then, by virtue of Lemmas \ref{lemw} and \ref{propu}, it follows that
\begin{equation} \label{needsanumber}
\curl(u-\al\Delta u)=\curl(w+\ga \H)=\curl w+\ga\delta=q+\ga\delta.
\end{equation}

We have shown the convergence of a subsequence of $\qep$ towards a solution of \eqref{limitsystem}. In the next section we will show that the solutions of \eqref{limitsystem} are unique, which, in turn, implies that the full sequence $\qep$ converges to $q$, without the need to pass to a subsequence.

To conclude the proof of part b) of Theorem \ref{mainthm} it remains to show that $u \in L^\infty_{loc}(\R_+;L^p(\R^2))$ for any $p>2$. This follows immediately from \eqref{uatinf} since it is easy to see, using Lemma \ref{Kal} part b) and the definition of $\Hi$, that both $K^\al$ and $\Hi \in L^p(\R^2)$, for all $p>2$, and since $H^1(\R^2) \subset L^p(\R^2)$ for all $p\geq 1$.

\begin{remark}\label{remark}
Recall that the kernel of the solution operator in the full plane for $\curl(1-\al\Delta)$, on divergence-free vector fields vanishing at infinity, is $K^\al=\mathcal{G}_\alpha \ast H$. This solution operator can be easily extended to solenoidal vector fields in $L^p(\R^2)$.  Therefore, using \eqref{needsanumber} and that $u \in L^\infty_{loc}(\R_+;L^p(\R^2))$, $p>2$, we infer that $u$ can be expressed as:
\begin{equation*}
u=K^\al\ast(q+\ga\delta)=K^\al\ast q+\ga K^\al.
\end{equation*}
Moreover, if we denote by $\check{v}=H\ast q$ the velocity field associated to $q$ in $\R^2$ then one can check that
\begin{equation*}
\curl(\partial_t \check{v}+u\cdot\nabla \check{v}+\sum_j \check{v}_j\nabla u_j)=\partial_t \curl \check{v}+u\cdot\nabla \curl \check{v}=\partial_t q+u\cdot\nabla q=0.
\end{equation*}
So the velocity formulation of \eqref{limitsystem} can be written in the form
\begin{align*}
\partial_t \check{v}+u\cdot\nabla \check{v}+\sum_j \check{v}_j\nabla u_j+\nabla p&=0\\
\dive u&=0\\
u-\al\Delta u&=\check{v}+\ga H.
\end{align*}
\end{remark}

\section{Uniqueness for the limit system}
\label{sect6}

The global existence of  solutions of \eqref{limitsystem} follows from the convergence result established in the previous section. Here we will prove uniqueness of solutions of \eqref{limitsystem}, thereby completing the proof of Theorem \ref{mainthm}.

Let us observe that the $\al$--Euler equations in $\R^2$, with an initial vorticity given by a bounded measure such as $q_0+\ga\delta$, have a global unique solution, see \cite{oliver_vortex_2001}. But, as noted in the introduction, even though the limit system \eqref{limitsystem} is very similar to the $\alpha$--Euler system, it is not the same. In addition, \cite{oliver_vortex_2001} proves uniqueness of Lagrangian solutions by working on the trajectories of the velocity field. Even if we could adapt the proof of uniqueness for $\al$--Euler to \eqref{limitsystem}, we would still have to make the connection between Lagrangian solutions and the Eulerian solutions considered here. Instead, we will give below a more classical proof of uniqueness, based on energy estimates.

Let $q,q'\in L^\infty(\R_+;L^1(\R^2)\cap L^\infty(\R^2))$ be two solutions of the limit system \eqref{limitsystem} with the same initial data $q(0,x)=q'(0,x)=q_0(x)$. Then $u=K^\al\ast q+\ga K^\al$ and $u'=K^\al\ast q'+\ga K^\al$, see Remark \ref{remark}.

Let $\qb=q-q'$, $\ub=u-u'$ and $\vb=H\ast\qb=\nabla^\perp\Delta^{-1}\qb$.
Clearly $\int_{\R^2}q\,dx=\int_{\R^2}q_0\,dx=\int_{\R^2}q'\,dx$ so $\int_{\R^2}\qb\,dx=0$. In addition, $q$ and $q'$ are obviously compactly supported in space. We infer that $\vb$ decays like $O(1/|x|^2)$ at infinity, so that it belongs to $L^2$.

We have the following PDE for $\qb$:
\begin{equation*}
\partial_t\qb+u\cdot\nabla q-u'\cdot\nabla q'=0.
\end{equation*}
We multiply by $\Delta^{-1}\qb$ and integrate in $[0,T]\times\R^2$. We follow the same argument as in Section \ref{sect3}, when we estimated $\frac{\d}{\d t}\|\vb^{n+1}-\vb^n\|_{L^2(\Piep)}^2$. Redoing the same estimates as those found on pages \pageref{page1}--\pageref{page2} we find
\begin{align*}
\frac12\|\vb(T)\|_{L^2}^2
&=-\int_0^T\int_{\R^2}(\partial_2u_1+\partial_1u_2)\vb_1\vb_2
-\frac12 \int_0^T\int_{\R^2}(-\partial_1u_1+\partial_2u_2)(\vb_2^2-\vb_1^2)\\
&\hskip 6cm+\int_0^T\int_{\R^2}\ub\cdot\vb^\perp \,q'\\
&\leq C\int_0^T (\|\partial_2u_1+\partial_1u_2\|_{L^\infty}+\|\partial_1u_1-\partial_2u_2\|_{L^\infty})\|\vb\|_{L^2}^2\\
&\hskip 6cm+C\int_0^T\|\ub\|_{L^2}\|\vb\|_{L^2}\|q'\|_{L^\infty}\\
&\leq C\int_0^T (1+\|\partial_2u_1+\partial_1u_2\|_{L^\infty}+\|\partial_1u_1-\partial_2u_2\|_{L^\infty})\|\vb\|_{L^2}^2,
\end{align*}
where we used the relation $\ub-\al\Delta\ub=\vb$ to bound $\|\ub\|_{L^2}\leq \|\vb\|_{L^2}$ and, also, that $q'$ is bounded in $L^\infty$.

We now use \eqref{newest} to obtain
\begin{align*}
\|\partial_1u_1-\partial_2u_2\|_{L^\infty}
&\leq \|(\partial_1K^\al_1-\partial_2K^\al_2)\ast q\|_{L^\infty} +|\ga|\|\partial_1K^\al_1-\partial_2K^\al_2\|_{L^\infty}\\
&\leq \|(\partial_1K^\al_1-\partial_2K^\al_2)\|_{L^\infty}(\|q\|_{L^1} +|\ga|)\\
&\leq C,
\end{align*}
where $C$ is uniform in time. A similar estimate holds true for
$\|\partial_2u_1+\partial_1u_2\|_{L^\infty}$. We deduce that
\begin{equation*}
\|\vb(T)\|_{L^2}^2\leq C\int_0^T \|\vb\|_{L^2}^2.
\end{equation*}
The Gronwall inequality then implies that $\vb=0$, so that $q=q'$. This completes the proof of Theorem \ref{mainthm}.

\vspace{.5cm}

\scriptsize{
\textbf{Acknowledgments.}   M. C. Lopes Filho and H.J. Nussenzveig Lopes thank the PICS \#288801 (PICS08111)  of the CNRS, for their financial support for the scientific visits which led to this work. M.C. Lopes Filho acknowledges the support of CNPq through grant \# 310441/2018-8 and of FAPERJ through grant \# E-26/202.999/2017 . H.J. Nussenzveig Lopes thanks the support of CNPq through grant \# 309648/2018-1  and of FAPERJ through grant \# E-26/202.897/2018.
M. C. Lopes Filho and H. J. Nussenzveig Lopes thank both the Universit\'e de Lyon 1 and the Universit\'e de Saint-\'Etienne, where part of this work was done. In addition, M. C. Lopes Filho and H. J. Nussenzveig Lopes would like to thank the Isaac Newton Institute for Mathematical Sciences for support and hospitality during the programme ``Mathematical aspects of turbulence: where do we stand?". This work was supported in part by:

\vspace{.1cm}

\noindent EPSRC Grant Number EP/R014604/1.
}

%\bibliographystyle{myabbrven}
%\bibliography{../OTHER/MACROS/zotero}

 \bigskip

\begin{description}
\item[Adriana Valentina Busuioc] Université de Lyon, Université de Saint-Etienne  --
CNRS UMR 5208 Institut Camille Jordan --
Faculté des Sciences --
23 rue Docteur Paul Michelon --
42023 Saint-Etienne Cedex 2, France.\\
Email: \texttt{valentina.busuioc@univ-st-etienne.fr}\\
Web page: \texttt{https://perso.univ-st-etienne.fr/busuvale/}
\item[Dragoş Iftimie] Université de Lyon, CNRS, Université Lyon 1, Institut Camille Jordan, 43 bd. du 11 novembre, Villeurbanne Cedex F-69622, France.\\
Email: \texttt{iftimie@math.univ-lyon1.fr}\\
Web page: \texttt{http://math.univ-lyon1.fr/\~{}iftimie}
\item[Milton C. Lopes Filho] Instituto de Matem\'atica,
Universidade Federal do Rio de Janeiro,
Cidade Universit\'aria -- Ilha do Fund\~ao,
Caixa Postal 68530,
21941-909 Rio de Janeiro, RJ -- Brasil. \\
Email: \texttt{mlopes@im.ufrj.br} \\
Web page: \texttt{http://www2.im.ufrj.br/mlopes}
\item[Helena J. Nussenzveig Lopes] Instituto de Matem\'atica,
Universidade Federal do Rio de Janeiro,
Cidade Universit\'aria -- Ilha do Fund\~ao,
Caixa Postal 68530,
21941-909 Rio de Janeiro, RJ -- Brasil. \\
Email: \texttt{hlopes@im.ufrj.br}\\
Web page: \texttt{http://www2.im.ufrj.br/hlopes}
\end{description}

\end{document}